\title{\bf Exponential Objects in Categories of Generalized Uniform Hypergraphs}
\author{Martin Schmidt\\
	\small Department of Mathematics and Informatics\\[-0.8ex]
	\small Chiba University\\[-0.8ex] 
	\small Chiba, Japan
        }
\theoremstyle{plain}
\newtheorem{thm}{Theorem}[section]
\newtheorem{lemma}[thm]{Lemma}
\newtheorem{proposition}[thm]{Proposition}
\newtheorem{corollary}[thm]{Corollary}
\theoremstyle{definition}
\newtheorem{definition}[thm]{Definition}
\newtheorem{example}[thm]{Example}
\begin{document}
\newpage
\maketitle

\begin{abstract}
	We construct exponential objects in categories of generalized uniform hypergraphs and use embeddings induced by nerve-realization adjunctions to show why conventional categories of graphs and hypergraphs do not have exponential objects. 
	
	\bigskip\noindent \textbf{Keywords:} graph; hypergraph; uniform hypergraph; exponential object
\end{abstract}

\section{Introduction}

It is well-known that exponential objects do not exist in conventional categories of uniform hypergraphs. To address this obstruction, we introduce categories of $(X,M)$-graphs and reflexive $(X,M)$-graphs, which are categories of presheaves\footnote{Recall a category of presheaves is one which is equivalent to a functor category $[\C C^{op},\Set]$ for some small category $\C C$.} on two-object categories (Definition \ref{D:XMGraph}). We concretely construct exponential objects for any given category of (reflexive) $(X,M)$-graphs and use these constructions to prove that the exponential closures of categories of uniform hypergraphs are categories of $(X,M)$-graphs. 

The categories of (reflexive) $(X,M)$-graphs should be thought of as categories of generalized $k$-uniform hypergraphs where $k$ is the cardinality of $X$. The objects can be viewed as generic containers for sets of vertices and sets of arcs, where a monoid $M$ informs the type of coherence involved. 

There are two features of categories of (reflexive) $(X,M)$-graphs which distinguish them from the more conventional categories of uniform hypergraphs. The first is that the edges have incidence in multisets. The other is the presence of unfixed edges. In the case $X$ is a two-elements set, the only unfixed edges are $2$-loops, which are called bands in \cite{rBS}. We prove that unfixed edges are necessary for the construction of exponentials in conventional categories of graphs. 

As a consequence of our constructions we address the problem that the category of $k$-uniform hypergraphs (as defined in \cite{wD}) lacks connected colimits, exponentials and does not continuously embed into the category of hypergraphs. We prove there is a continuous embedding of the category of $k$-uniform hypergraphs in a category of $(X,M)$-graphs (Proposition \ref{P:HyperGSG}) which preserves any relevant categorical structures (e.g., colimits, exponentials, injectives, projectives). Therefore, working in a category of (reflexive) $(X,M)$-graphs provides a better categorical environment for constructions on uniform hypergraphs.

\section{$(X,M)$-Graphs}
We begin with a definition.

\begin{definition}\label{D:XMGraph} \mbox{}
	\begin{enumerate}
		\item Let $M$ be a monoid and $X$ a right $M$-set. 
		The theory for $(X,M)$-graphs, $\DD G_{(X,M)}$, is the category with two objects $V$ and $A$ and homsets given by
		\begin{align*}
		   & \DD G_{(X,M)}(V,A) \defeq X, \\
		  & \DD G_{(X,M)}(A,V) \defeq \empset, \\
		  & \DD G_{(X,M)}(V,V)\defeq \{\Id_V\},\\
		 & \DD G_{(X,M)}(A,A)\defeq M.
		\end{align*}
		Composition is defined as  $m\circ x=x.m$ (the right-action via $M$), $m\circ m'=m'm$ (monoid operation of $M$).
		\item Let $M$ be a monoid such that the set $\Fix(M)\defeq \setm{m'\in M}{\forall m\in M, m'm=m'}$ is non-empty. Let $X\defeq \setm{x_{m'}}{m'\in \Fix(M)}$ be the right $M$-set with right-action $x_{m}.m'\defeq x_{mm'}$ for each $m\in M$ and $x_{m'}\in X$. The theory for reflexive $(X,M)$-graphs, $\rG_{(X,M)}$ is the same as for $\DD G_{(X,M)}$ but with 
		\begin{align*}
		\rG_{(X,M)}(A,V)\defeq \{\ell\},
		\end{align*}
		and composition $\ell\circ m=\ell$, $\ell\circ x_{m'}=\Id_V$, and $x\circ \ell=x$ for each $m\in M$ and $m'\in \Fix(M)$.
	\end{enumerate}
	The category of $(X,M)$-graphs (resp. reflexive $(X,M)$-graphs) is defined to be the category of presheaves $\widehat{\DD G}_{(X,M)}\defeq [\DD G_{(X,M)}^{op},\Set]$ (resp. $\widehat{\rG}_{(X,M)}\defeq [\rG_{(X,M)}^{op},\Set].$) 
\end{definition}

By definition, an $(X,M)$-graph $G\colon \DD G^{op}_{(X,M)}\to \Set$ has a set of vertices $G(V)$ and a set of arcs $G(A)$ along with right-actions for each morphism in $\DD G_{(X,M)}$. For example, $x\colon V\to A$ in $\DD G_{(X,M)}$ yields a set map $G(x)\colon G(A)\to G(V)$ which takes an arc $\alpha\in G(A)$ to $\alpha.x\defeq G(x)(\alpha)$ which we think of as its $x$-incidence.\footnote{Note that we use the categorical notation of evaluation of a presheaf as a functor for the set of vertices $G(V)$ and set of arcs $G(A)$ rather than the conventional graph theoretic $V(G)$ and $E(G)$ for the vertex set and edge set.} For an element $m$ in the monoid $M$, the corresponding morphism $m\colon A\to A$ in $\DD G_{(X,M)}$ yields a right-action $\alpha.m\defeq G(m)(\alpha)$ which we think of as the $m$-associated partner of $\alpha$.  If $G$ is a reflexive $(X,M)$-graph, the $\ell$-action can be thought of as the extraction of a loop from a vertex. We call a loop equal to $x.\ell$ a distinguished loop for vertex $x$. It can be thought of as the arc-proxy for the vertex. This will allow us to map arcs to vertices, or more precisely, arcs to distinguished loops.

Each $(X,M)$-graph $G$ induces a set map $\partial_G\colon G(A)\to G(V)^X$ such that $\partial_G(\alpha)\colon X\to G(V)$ is the parametrized incidence of $\alpha$, i.e., $\alpha.x=\partial_G(\alpha)(x)$. The $x$-incidence can be recovered from a parametrized incidence by precomposition of the map $\named x\colon 1\to X$ which names the element $x$ in $X$. Observe that the $m$-associated partner of an arc $\alpha$ in $G$ has the parametrized incidence such that the following commutes
\[
\xymatrix@C=3em{ X \ar@/_1em/[rrr]_{\partial_{G}(\alpha.m)} \ar[r]^-{\langle \Id_X, \named m \rangle} & X\x M \ar[r]^-{\text{\tiny action}} & X \ar[r]^-{\partial_{G}(\alpha)} & G(V).}
\]
If $G$ is a reflexive graph, $\partial_{G}(x.\ell)=\named x\circ !_X$  where $!_X$ is the terminal set map. 

Let $X$ be a set, we define the following submonoids of the endomap monoid $\End(X)$:
\begin{align*}
& \oX\defeq \{\Id_X\}\\
& \sX \defeq \Aut(X) \quad \text{(the submonoid of automaps)}\\
& \rX \defeq \setm{f\in \End(X)}{f=\Id_X\text{ or } \exists x\in X,\ \forall x'\in X,\ f(x')=x}\\
& \srX \defeq \rX\cup \sX\\
& \hX=\hrX\defeq \End(X).
\end{align*}
Thus there is the following inclusions as submonoids in $\End(X)$
\[
\xymatrix@R=1em@C=1.25em{ \oX \ar@{>->}[d] \ar@{>->}[r]  & \sX \ar@{>->}[r] \ar@{>->}[d] & \hX \ar@{=}[d] \\ \rX \ar@{>->}[r] & \srX \ar@{>->}[r] & \hrX }
\]
The right-action of $M\subseteq \End(X)$ on $X$ is given by evaluation, e.g. $x.f\defeq f(x)$.\footnote{Note that the monoid operation on $\End(X)$ is given by $f\cdot g=g\circ f$.} 

\begin{definition}  Let  $X$ be a set. 
	\begin{enumerate}
		\item The theory for oriented $X$-graphs (resp. symmetric $X$-graphs, hereditary $X$-graphs) is defined as $\oG_X\defeq \DD G_{(X,\oX)}$ (resp. $\sG_X\defeq \DD G_{(X,\sX)}$, $\hG_X\defeq \DD G_{(X,\hX)}$). The category of oriented $X$-graphs (resp. symmetric $X$-graphs, hereditary $X$-graphs) is its category of presheaves $\widehat{\oG}_{X}$ (resp. $\widehat{\sG}_{X}$, $\widehat{\hG}_{X}$). 
		\item The theory for reflexive oriented $X$-graphs (resp. reflexive symmetric $X$-graphs, reflexive hereditary $X$-graphs) is defined as $\rG_X\defeq \rG_{(X,\rX)}$ (resp. $\srG_X\defeq \rG_{(X,\srX)}$, $\hrG_X\defeq \rG_{(X,\hrX)}$).\footnote{In each case, $X$ can be verified to be the submonoid of fixed elements given in the definition of a reflexive theory.} The category of reflexive oriented $X$-graphs (resp. reflexive symmetric $X$-graphs, reflexive hereditary $X$-graphs) is its category of presheaves $\widehat{\roG}_{X}$ (resp. $\widehat{\srG}_{X}$, $\widehat{\hrG}_{X}$). 
	\end{enumerate}
\end{definition}
The various categories of $X$-graphs can be thought of as models for $k$-uniform hypergraphs where $k$ is the cardinality of $X$ and the arcs take its incidence relation in multisets of vertices.  
\begin{example} \label{E:XGraphs}\mbox{}
	\begin{enumerate}
		\item When $X=\empset$, the categories of oriented, symmetric and hereditary $X$-graphs is the category $\Set\x \Set$. 
		\item \label{E:Bouq} When $X=1$ is a one element set, the categories of oriented, symmetric and hereditary graphs is the category of bouquets, i.e., the category of presheaves on $\xymatrix@C=1em{V \ar[r]^s & A}$ (\cite{mR}, p 18). The categories of reflexive, reflexive symmetric and reflexive hereditary $X$-graphs is the category of set retractions.
		\item  When $X=\{s,t\}$, the categories of oriented, reflexive, symmetric, reflexive symmetric graphs are the categories of directed graphs, directed graphs with degenerate edges, undirected graphs with involution in \cite{rBS}. 
		
		The following is an example of a reflexive symmetric $X$-graph where $i\colon X\to X$ denotes the non-trivial automap.	
		\begin{multicols}{2}
			\begin{center}
				$G\ $\framebox{ 
					$\xymatrix{ a \ar@{..>}@(ul,ur)[]^{\ell_a} \ar@<+.2em>[r]^{\alpha_0} \ar@<-.2em>@{<-}[r]_{\alpha_1} & b \ar@(d,dl)[]^{\beta_0} \ar@(d,dr)[]_{\beta_1} \ar@{..>}@(ul,ur)[]^{\ell_b} \ar@<+.2em>[r]^{\gamma_0} \ar@<-.2em>@{<-}[r]_{\gamma_1} & c \ar@{..>}@(ul,ur)[]^{\ell_c}}$}
			\end{center}
			\begin{align*}
			& G(A)= \{\alpha_0,\alpha_1,\beta_0,\beta_1,\gamma_0,\gamma_1,\ell_a,\ell_b,\ell_c \},\\
			& G(V)= \{a,\ b,\ c \}\\
			& \alpha_0.s= a,\  \alpha_0.t= b,\ \beta_0.s= b,\ \beta_0.t= b,\\
			& \gamma_0.s= c,\ \gamma_1.t= b,\\
			& a.\ell= \ell_a,\ b.\ell= \ell_b,\ c.\ell= \ell_c,\\
			& \alpha_0.i= \alpha_1,\ \beta_0.i= \beta_1,\ \gamma_0.i= \gamma_1	
			\end{align*}
		\end{multicols}
	Each loop extracted from a vertex via $\ell$ is depicted by a dotted arrow. We will call these arrows distinguished loops. They should be thought of as proxies for the vertices. Notice that for a distinguished loop $\ell_a$, we have $\ell_a.i=\ell_a$ since $\ell\circ i=\ell$ in $\srG_{X}$. However, a  non-distinguished loop may not be fixed by the right-action of $i$, as is the case with $\beta_0$ and $\beta_1$ above. If a loop $\delta$ has a distinct $i$-pair (i.e., $\delta.i\neq \delta$), we call it a nonfixed loop (or a 2-loop in the case $X=2$).\footnote{In \cite{rBS}, it is called a band.} If $\delta$ is fixed by the $i$-action (i.e., $\delta.i=\delta$) it is called a fixed loop (or a 1-loop).  
		
		To connect this definition to undirected graphs, we identify edges which are $i$-pairs and define the set of edges $G(E)$ as the quotient of the set of arrows $G(A)$ under this automorphism defined by the $i$-action.\footnote{In the subsequent, we reserve the term edge for the equivalence class of arcs under the group $\sX$.} There is an incidence operator $\partial\colon G(E)\to G(V)^2$ which defines for an $i$-pair the set of boundaries. Then an undirected representation for $G$ can be given as
\end{enumerate}
		\begin{multicols}{2}
			\begin{center}$G\ $\framebox{ 
					$\xymatrix{ a \ar@{..}@(ul,ur)[]^{\ell_a}  \ar@{-}[r]^{\alpha_0\sim \alpha_1} & b \ar@(dl,dr)@{-}[]_{\beta_0\sim \beta_1}^2  \ar@{..}@(ul,ur)[]^{\ell_b} \ar@{-}[r]^{\gamma_0\sim \gamma_1} & c \ar@{..}@(ul,ur)[]^{\ell_c}}$
				}
			\end{center}
			\begin{align*}
			& \text{ \footnotesize $G(E)= \{\alpha_0\sim\alpha_1, \beta_0\sim\beta_1,\gamma_0\sim\gamma_1,\ell_a,\ell_b,\ell_c \}$, }\\
			& \text{ \footnotesize $G(V)= \{a,\ b,\ c \}$ }\\
			& \text{ \footnotesize $(\alpha_0\sim \alpha_1).\partial=\{a,b \},\ (\beta_0\sim\beta_1).\partial=\{b,b\},$ }\\
			& \text{ \footnotesize $(\gamma_0\sim \gamma_1).\partial= \{b,c\},\ \ell_a.\partial=\{a,a \}, \ \ell_b.\partial=\{b,b\},$}\\ 
			&\text{\footnotesize $\ell.c=\{c,c\}$.}	
			\end{align*}
		\end{multicols}
\begin{enumerate}[resume]	
\item[]		We have placed a $2$ in the loop which came from the 2-loop $\beta_0\sim \beta_1$ even though the quotient has identified them. Keeping a distinction between fixed loops and nonfixed loops is necessary for constructions of exponentials (see Corollary \ref{C:Exponentials} below).\footnote{In the subsequent, if a loop has no number written inside it is assumed to be a fixed loop.}
	\end{enumerate}
\end{example}

\section{The Yoneda Embedding}
In the category of $(X,M)$-graphs the representable $\underline V\defeq \DD G_{(X,M)}(V,-)$ consists of one vertex corresponding to the identity morphism and an empty arc set. In the reflexive case, $\underline V\defeq \rG_{(X,M)}(V,-)$ also has one distinguished loop corresponding to the morphism $\ell\colon A\to V$.  The representables $\underline A\defeq \DD G_{(X,M)}(A,-)$ and $\underline A\defeq \rG_{(X,M)}(A,-)$ each have vertex set equal to $X$ corresponding to each morphism $x\colon V\to A$ and arc set equal to $M$. The right-actions are given by Yoneda, e.g., $\underline \sigma=Y(\sigma)\colon \underline A\to \underline A$. Observe that each representable has no nonfixed loops.

\begin{example} 
	 Let $X=\{s,t\}$. The Yoneda embedding gives the following diagrams, 
	
	\begin{center}
		$\text{$\widehat{\oG}_X:$}\qquad \underline V\ $\framebox{ 
			$\xymatrix{ v_1 }$} $\xymatrix{ \ar@<+.3em>[rr]^{\underline s} \ar@<-.3em>[rr]_{\underline t} &&}$
		\framebox{ 
			$\xymatrix{ v_s  \ar[rr]^{a_{1}} &&  v_t }$} $\xymatrix{ \underline A} \qquad $
	\end{center}
	\begin{center}
		$\text{$\widehat{\roG}_X:$}\qquad \underline V\ $\framebox{ 
			$\xymatrix{ v_1 \ar@{..>}@(ul,ur)[]^{a_\ell}}$} $\xymatrix{ \ar@<+1.5em>[rr]^{\underline s} \ar[rr]^{\underline t} \ar@{<-}@<-.5em>[rr]_{\underline \ell} &&}$
		\framebox{ 
			$\xymatrix{ v_s \ar@{..>}@(ul,ur)[]^{a_{s\ell}} \ar[rr]^{a_{1}} &&  v_t \ar@{..>}@(ul,ur)[]^{a_{t\ell}}}$} $\xymatrix{ \underline A \ar@(ul,ur)[]^{\underline{s}\circ \underline \ell} \ar@(dl,dr)[]_{\underline{t}\circ \underline \ell }}\qquad$
	\end{center}
	\begin{center}
		$\text{$\widehat{\sG}_X:$}\qquad \underline V\ $\framebox{ 
			$\xymatrix{ v_1 }$} $\xymatrix{ \ar@<+.3em>[rr]^{\underline s} \ar@<-.3em>[rr]_{\underline t} &&}$
		\framebox{ 
			$\xymatrix{ v_s  \ar@{-}[rr]^{a_{1}\sim a_{i}} &&  v_t }$} $\xymatrix{ \underline A \ar@(ur,dr)[]^{\underline i}}$
	\end{center}
	\begin{center}
		$\text{$\widehat{\srG}_X:$}\qquad \underline V\ $\framebox{ 
			$\xymatrix{ v_1 \ar@{..}@(ul,ur)[]^{a_\ell}}$} $\xymatrix{ \ar@<+1.5em>[rr]^{\underline s} \ar[rr]^{\underline t} \ar@{<-}@<-.5em>[rr]_{\underline \ell} &&}$
		\framebox{ 
			$\xymatrix{ v_s \ar@{..}@(ul,ur)[]^{a_{s\ell}} \ar@{-}[rr]^{a_{1}\sim a_{i}} &&  v_t \ar@{..}@(ul,ur)[]^{a_{t\ell}}}$} $\xymatrix{ \underline A \ar@(ur,dr)[]^{\underline i} \ar@(ul,ur)[]^{\underline{s}\circ \underline \ell} \ar@(dl,dr)[]_{\underline{t}\circ \underline \ell }}$
	\end{center}
	where $i\colon \{s,t\}\to \{s,t\}$ is the non-trivial automapping, $\underline s, \underline t$ are the symmetric $X$-graph morphisms which pick out $v_s$ and $v_t$ respectively, and $\underline i$ is the symmetric $X$-graph morphism which swaps $v_s$ with $v_t$ and $a_1$ with $a_i$. In the reflexive case, $\underline \ell$ is the terminal morphism, $\underline s\circ \ell, \underline t\circ \ell$ takes each arc to $a_{s\ell}$ and $a_{t\ell}$ respectively, and $\underline i$ swaps loops $a_{s\ell}$ with $a_{t\ell}$ and $a_1$ with $a_i$.
	
\end{example}

\section{Exponentials}\label{S:Exponential}
Let $G$ and $H$ be (reflexive) $(X,M)$-graphs. By Yoneda and the exponential adjunction, 
\begin{align*}
G^H(V)&=\widehat{\DD G}_{(X,M)}(\underline V,G^H)\iso \widehat{\DD G}_{(X,M)}(\underline V\x H, G)\\
G^H(A)&=\widehat{\DD G}_{(X,M)}(\underline A, G^H)\iso \widehat{\DD G}_{(X,M)}(\underline A\x H, G)
\end{align*}
with right-actions being defined by precomposition. For instance, given $f\colon \underline A\x H\to G$ (i.e., an arc in $G^H$), for each $x\in X$, $f.x\defeq f\circ (\underline x\x H)\colon \underline V\x H\to G$. The evaluation morphism is defined on components 
\begin{align*}
\ev_V&\colon \widehat{\DD G}_{(X,M)}(\underline V\x H, G)\x H(V) \to G(V), \qquad (\gamma,v)\mapsto \gamma_{V}(\Id_V,v),\\
\ev_A&\colon \widehat{\DD G}_{(X,M)}(\underline A\x H,G)\x H(A)\to G(A),\ \qquad (\delta,a)\mapsto \delta_{A}(\Id_A,a).
\end{align*}
Thus for $(X,M)$-graphs, the vertex set $G^H$ is given by $G(V)^{H(V)}$ since $\underline V$ has just a single vertex with no arcs. For reflexive $(X,M)$-graphs, since $\underline V$ is the terminal object, $\underline V\x H\iso H$, the vertex set is given by the homset $G^H(V)=\widehat{\DD G}_{(X,M)}(H,G)$.

To give a description of the arc set of the (reflexive) $(X,M)$-graph $G^H$. We define a set map analogous to taking a homset of a category
\[
\overline G\colon G(V)^X\to 2^{G(A)},\quad (v_x)_{x\in X}\mapsto \setm{\beta\in G(A)}{\forall x\in X,\ \beta.x=v_x}.
\] 
We recall that the graph $\underline A\x H$ has a parametrized incidence operator $\partial\colon \underline A\x H(A)\to \\((\underline A \x H)(V))^X$.  For each set map $f\colon X\x H(V)=(\underline A\x H)(V)\to G(V)$ we compose to obtain the following diagram.
\[
\xymatrix{ \underline A\x H(A) \ar[d]_{\partial} \ar[r]^-{G_f\defeq\overline Gf^X\partial} & 2^{G(A)} \\ (X\x H(V))^X \ar[r]^-{f^X} & G(V)^X \ar[u]_{\overline G} } 
\]
We see that $G_f\defeq \overline Gf^X\partial(a_\sigma, \alpha)$ is the set of arcs in $G$ with the same set of incident vertices determined by the value of $f$ on the incident vertices of the arc $(a_\sigma, \alpha)$ in $\underline A\x H(A)$. Observe that a morphism $g\colon \underline A\x H\to G$ is determined on the arcs of $H(A)$, i.e., given an arc $(a_\sigma, \alpha)\in \underline A\x H(A)$ we have $g_A(a_\sigma,\alpha)=g_A(a_1,\alpha).\sigma$.

The general formula for the arc set of exponentials of  non-reflexive $(X,M)$-graphs is as follows
\begin{align*}
G^H(A)& \textstyle = \bigsqcup_{f\in (G^H(V))^X}\prod_{\alpha\in H(A)}G_f(a_1,\alpha)
\end{align*}
Thus an arc in $G^H$ is given by a pair $(f=(f_x)_{x\in X},g)$ where $\left(f_x\colon H(V)\to G(V)\right)_{x\in X}$ is a family of set map and $g\colon H(A)\to G(A)$ is an element in the product $\prod_{\alpha\in H(A)}G_f(a_1,\alpha)$. Note that $((f_x)_{x\in X},g)$ is an arc in $G^H$ implies $f\colon X\x H(V)\to G(V)$ has at least one extension to a morphism $\underline A\x H\to G$. 

Given a family of set maps $\left(f_x\colon H(V)\to G(V)\right)_{x\in X}$, we define $\overline f\colon H(V)^X\to G(V)^X$ where $\overline f(h)(x)\defeq f_x(h(x))$ for each $h\in H(V)^X$ and $x\in X$. Then the set of arcs has an equivalent description 
\[
	G^H(A)=\setm{((f_x)_{x\in X},g)\in (G^H(V))^X\x G(A)^{H(A)}}{\overline f\circ \partial_H=\partial_G\circ g}
\]
i.e., it is the set of pairs $((f_x)_{x\in X},g)$ such that $g(\alpha).x=f_x(\alpha.x)$ for each $\alpha\in H(A)$ and $x\in X$. In diagram form we require that the following commute
\[
	\xymatrix{ H(A) \ar[d]_{\partial_H} \ar[r]^g & G(A) \ar[d]^{\partial_G} \\ H(V)^X \ar[r]^{\overline f} & G(V)^X.}
\]
The right-actions are given by $((f_x)_{x\in X},g).x=f_x$ for each $x\in X$ and  $((f_x)_{x\in X},g).\sigma=((f_{\sigma(x)})_{x\in X},g.\sigma)$ for each $\sigma\in M$ where $g.\sigma\colon H(A)\to G(A)$ takes $\alpha$ to $g(\alpha.\sigma)$. In other words, the following commute for each $x\in X$ and $\sigma\in M$.
\[
	\xymatrix@C=4em{ & G(V) \ar@{>->}[dr] & \\ H(V) \ar[ur]^{((f_x)_{x\in X},g).x} \ar@{>->}[r]^-{\iota_x} & \underline A\x H \ar[r]^{((f_x)_{x\in X},g)} & G} \qquad
	\xymatrix@C=4em{ \\ \underline A\x H \ar@/^3em/[rr]^-{((f_x)_{x\in X},g).\sigma} \ar[r]^-{\underline \sigma\x 1} & \underline A \x H \ar[r]^{((f_x)_{x\in X},g)} & G }
\]
where $\iota_x\colon H(V)\to \underline A\x H$ sends vertex $v$ to $(x,v)$.

In the reflexive case, given a family of morphisms $(f_x\colon H\to G)_{x\in X}$, we define $f\colon X\x H(V)\to G(V)$, $(x,v)\mapsto f_x(v)$.   Then the formula above hold for the reflexive case as well. We have 
\begin{align*}
	G^H(V)&=\widehat{\rG}_{(X,M)}(H,G)\\
	G^H(A)&=\textstyle \bigsqcup_{f\in (G^H(V))^X}\prod_{\alpha\in H(A)}G_f(a_1,\alpha).
\end{align*}
Alternatively, $G^H(A)=\setm{((f_x)_{x\in X},g)\in (G^H(V))^X\x G(A)^{H(A)} }{\overline f\circ \partial_H=\partial_G\circ g}$ as above. Then an arc in $G^H$ is given by a pair $((f_x)_{x\in X},g)$ where $(f_x)_{x\in X}$ is a family of graph morphisms $f_x\colon H\to G$ and $g\colon H(A)\to G(A)$ is an element in the product $\prod_{\alpha\in H(A)}G_f(a_1,\alpha)$. Then for each $x\in X$, $((f_x)_{x\in X},g).x=f_x$. Given a morphism $k\colon H\to G$ (i.e., a vertex in $G^H$), $k.\ell=((k)_{x\in X}, k_A)$ where $k_A\colon H(A)\to G(A)$ is the evaluation of $k$ at the arc component. For each $\sigma\in M$,  $((f_x)_{x\in X},g).\sigma=((f_{x.\sigma})_{x\in X}, g.\sigma)$ where $g.\sigma\colon H(A)\to G(A)$ takes $\alpha$ to $g(\alpha.\sigma)$.

The evaluation morphism $\ev\colon G^H\x H\to G$ for (reflexive) $(X,M)$-graphs is given as 
\begin{align*}
&\ev_V\colon G^H(V)\x H(V) \to G(V),\qquad (h, v) \mapsto h(v), \\
&\ev_A\colon G^H(A)\x H(A)\to G(A),\qquad (((f_x)_{x\in X},g),\alpha)\mapsto g(\alpha)
\end{align*}

\begin{example}\label{E:Exponentials}\mbox{}
	\begin{enumerate}
		\item \label{E:HypergraphFail}Let $X$ be a nonempty $M$-set. Then the exponential of $\underline V^{\underline V}$ in $\widehat{\DD G}_{(X,M)}$ is the terminal object $1$, which has one vertex and one fixed loop. This is an example of a creation of an arc from two $(X,M)$-graphs with no arcs. 
		
		More generally, for an arbitrary $(X,M)$-graph $G$, $G^{\underline V}$ is the $(X,M)$-graph with vertex set $G^{\underline V}=G(V)$ and arc set $G^{\underline V}(A)=G(V)^X$ with right-actions $f.x=f(x)$, $f.\sigma=f\circ \sigma$ for each $x\in X$, $f\in G^{\underline V}(A)$ and $\sigma\in M$.

		\item \label{E:XLoops} Let $X$ be a set with cardinality greater than 1 and consider the symmetric $X$-graph $\Gamma$ such that $L(V)\defeq \{v\}$ and $L(A)\defeq \{0, 1\}$ where $0.\sigma=0$ and $1.\sigma=1$ for each $\sigma\in \sX$. The vertex set for $L^{\underline A}$ is a singleton $\{v \}$ since $L(V)$ is a singleton. The set $\underline A_A=\setm{(a_1,a_\sigma)}{\sigma\in \sX}\iso \sX$ and thus the set of arcs is  $L^{\underline A}(A)\iso \Set(\sX, \{0,1\})$. We show that $L^{\underline A}$ contains a nonfixed loop. Consider a loop given by a set map $g\colon \sX\to \{0,1\}$ such that $\Id_X\mapsto 0$ and $\sigma\mapsto 1$ for the permutation $\sigma\colon X\to X$ which swaps distinct elements $x$ and $x'$ and leaves the rest fixed. Then $g.{\sigma}(\sigma)=g(
		\sigma\circ \sigma)=g(\Id_X)=0$ and thus $g.\sigma\neq g$.  Therefore $L^{\underline A}$ contains a nonfixed loop. 
		
		For example, when $X=\{s,t\}$, the exponential $L^{\underline A}$ has the following undirected representation.
\end{enumerate}
		\begin{multicols}{2}
			\begin{center}
				$L^{\underline A}\ \ $\framebox{ 
					$\xymatrix{ v \ar@{-}@(ul,ur)[]^{00} \ar@{-}@(l,d)[]_{11} \ar@{-}@(d,r)[]_{01\sim 10}^2}$}
			\end{center}
			\begin{align*}
			& \text{ \footnotesize $L^{\underline A}(A)=\Set(s(2),\{0,1\})=\{00,11,01\sim 10\}$}\\
			& \text{ \footnotesize $L^{\underline A}(V)=\{v \}$}\\
			& \text{ \footnotesize $00.i=00, \ 11.i=11, 01.i=10$}
			\end{align*}
		\end{multicols}
\begin{enumerate}[resume]
\item[]		where $i\colon 2\to 2$ is the non-identity automorphism and $xy\colon s(2)\to s(2)$ is the set map $xy(\Id_X)=x,\ xy(i)=y$ for $x,y\in \{0,1\}$. Evaluation on arcs is given by projection, e.g., $\ev_A(xy,a_1)=x$. 
		\item \label{E:1Loops}  Let $X$ be a set of cardinality greater than $1$ and consider the reflexive symmetric $X$-graph $L$ such that $L(V)\defeq \{v\}$ and $L(A)\defeq \{0,1 \}$. For each $\sigma\in \sX$, we set $i.\sigma=i$ for $i=0,1$. We also set $v.\ell=0$. The vertex set of the exponential $L^{\underline A}$ is $\widehat{\rG}_{(X,\srX)}(\underline A, L)=L(A)=\{0,1 \}$ by Yoneda. Using the construction above we obtain the arrow set
		\[
		L^{\underline A}(A)=L(A)^X\x \Set(\srX,\{0,1\}).
		\]
		We show that $L^{\underline A}$ contains a nonfixed loop. Consider the loop \\$((1)_{x\in X},g\colon \srX\to \{0,1\})$ such that $g(\Id_X)=0$ and $g(\sigma)=1$ for the automorphism $\sigma$ which exchanges two elements in $X$ and thus $g.{\sigma}(\Id_X)=g(\sigma)=1$ and $g.{\sigma}(\sigma)=g(\Id_X)=0$. Then $((1)_{x\in X},g).\sigma\neq ((1)_{x\in X}, g)$ showing $g$ is a nonfixed loop in $L^{\underline A}$.
		
		For example, when $X=\{s,t\}$, the exponential $L^{\underline A}$ has arc set equal to $2^2\x \Set(2^2,2)$, i.e., it has $2^6=64$ elements. Each arc can be represented by a 6-digit binary number. The exponential object $L^{\underline A}$ is given as follows.
		\newline
\end{enumerate}
		\begin{multicols}{2}
		\begin{center}
				$L^{\underline A}\ \ $\framebox{ 
					$\xymatrix{\\ \ar@<+.5em>@{}[r]^<<<<{7} & 0 \ar@{}[d]|->>4 \ar@{}@<+1em>[rr]^-{16} \ar@{-}[rr]^{\text{\tiny 0yzwu1$\sim$1ywzu0}} \ar@{..}@(ul,ur)[]|-{\text{\tiny 000000}} \ar@{-}@(ul,dl)[]|-{\text{\tiny 0yzzu0}} \ar@{-}@(dl,dr)[]|-{\text{\tiny 0yzwu0 $\sim$ 0ywzu0}}^2  && 1\ar@{..}@(ul,ur)[]|-{\text{\tiny 111111}} \ar@{-}@(ur,dr)[]|-{\text{\tiny 1yzzu1}}  \ar@{-}@(dl,dr)[]|-{\text{\tiny 1yzwu1 $\sim$ 1ywzu1}}^2 \ar@<.5em>@{}[r]^>>>>7 \ar@{}[d]|->>4 & \\ &  && }$}
			\end{center}
			\begin{align*}
			& \text{\footnotesize $L^{\underline A}(A)=\setm{(xyzwuv)}{x,y,z,w,u,v\in \{0,1\}}$}\\
			& \text{\footnotesize $L^{\underline A}(V)=\{0,1 \}$}\\
			& \text{\footnotesize $(xyzwuv).s=x,\ (xyzwuv).t=v,$}\\ 
			& \text{\footnotesize $(xyzwuv).i=(vywzux)$}.
			\end{align*}
		\end{multicols}
\begin{enumerate}[resume]
\item[]		where $i\colon 2\to 2$ is the non-identity automap. We see that $L^{\underline A}$ has 16 fixed loops (with 7 non-distinguished fixed loops at each vertex), 8 non-fixed loops (4 at each vertex) and 16 edges between vertices.  It is helpful to keep track of the edges associated to the digits $(\underset{s}{x}\underset{\ell_s}{y}\underset{a_1}{z}\underset{a_i}{w}\underset{\ell_t}{u}\underset{t}{v})$. Then evaluation $\ev_A\colon L^{\underline A}(A)\x \underline A\to \{0,1\}$ is given by projection to the corresponding digit, e.g., $\ev_A( (\underset{s}{x}\underset{\ell_s}{y}\underset{a_1}{z}\underset{a_i}{w}\underset{\ell_t}{u}\underset{t}{v}),\ell_s)=y$.\footnote{In \cite{dP}[Proposition 2.3.1], it is proven that the category of conceptual graphs does not have exponentials by attempting to construct the corresponding exponential $L^{\underline A}$. We have given a constructive reason why it failed. Namely, the objects in the category of conceptual graphs lack 2-loops.}
	\end{enumerate}
\end{example}

\section{Interpretations in Conventional Categories}
More conventional categories of graphs, uniform hypergraphs, and hypergraphs can be given as certain comma categories. These are cocomplete categories and thus admit nerve-realization adjunctions between categories of (reflexive) $(X,M)$-graphs induced by the obvious interpretation functors which we define in the subsequent. 

\subsection{Nerve-Realization Adjunction}
Let $I\colon \DD T \to \C M$ be functor from a small category $\DD T$ to a cocomplete category $\C M$. Since the Yoneda embedding $y\colon \C T\to \widehat{\DD T}$ is the free cocompletion of a small category there is a essentially unique adjunction $R\dashv N\colon \C M\to \widehat{\DD T}$, called the nerve realization adjunction, such that $Ry\iso I$. 
\[
\xymatrix{ \DD T \ar[dr]_I \ar[r]^{y} & \widehat{\DD T} \ar@<-.4em>[d]_{R}^{\dashv} \ar@{<-}@<+.5em>[d]^{N}\\ & \C M }
\]
The nerve and realization functors are given on objects by 
\begin{align*}
 N(m) &=\C M(I(-), m),\\
 R(X) &=\colim_{(c,\varphi)\in\int F} I(c)
\end{align*} respectively, where $\int F$ is the category of elements of $X$ (\cite{hA}, Section 2, pp 124-126).\footnote{In \cite{hA}, the nerve functor is called the singular functor.} 

We call a functor $I\colon \DD T \to \C M$ from a small category to a cocomplete category an interpretation functor. The category $\DD T$ is called the theory for $I$ and $\C M$ the modeling category for $I$. An interpretation $I\colon \DD T\to \C M$ is dense, i.e., for each $\C M$-object $m$ is isomorphic to the colimit of the diagram
$I\ls m \to \C M,\ (c,\varphi)\mapsto I(c),$
if and only if the nerve $N\colon \C M\to \widehat{\DD T}$ is full and faithful (\cite{sM}, Section X.6, p 245). When the right adjoint (resp. left adjoint) is full and faithful we call the adjunction reflective (resp. coreflective).\footnote{since it implies $\C M$ is equivalent to a reflective (resp. coreflective) subcategory of $\widehat{\DD T}$}

We are interested in when the nerve also preserves any exponentials which exist. For the purpose of this paper, we show that if an interpretation is dense, full and faithful, then the nerve not only preserves limits, but also any exponentials which exist.

\begin{lemma}\label{L:FFInt}
	An interpretation functor $I\colon \DD T \to \C M$ is full and faithful iff $\underline c\defeq y(c)$ is a $NR$-closed object for each $\DD T$-object $c$, i.e., the unit $\eta_{\underline c}\colon \underline c\to NR(\underline c)$ at component $\underline c$ is an isomorphism.
\end{lemma}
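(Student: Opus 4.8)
The plan is to unwind what the nerve-realization adjunction does on representables and show the unit isomorphism condition is literally the same data as full-faithfulness of $I$. First I would record the standard fact that $R\circ y \iso I$, which comes from the construction: the category of elements of a representable $\underline c = y(c)$ has a terminal object $(c,\Id_c)$, so the colimit $R(\underline c) = \colim_{(d,\varphi)\in \int \underline c} I(d)$ is simply $I(c)$. Hence $NR(\underline c) \iso N(I(c)) = \C M(I(-), I(c))$ as a presheaf on $\DD T$.

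Next I would identify the unit $\eta_{\underline c}\colon \underline c \to NR(\underline c)$ concretely at each object $d$ of $\DD T$. By Yoneda $\underline c(d) = \DD T(d,c)$, and under the isomorphism above the component $(\eta_{\underline c})_d$ becomes a map $\DD T(d,c)\to \C M(I(d),I(c))$. The key claim — which I would verify by chasing the triangle identities / the defining universal property of the nerve-realization adjunction — is that this map is exactly the action of the functor $I$ on homsets, i.e.\ $\varphi \mapsto I(\varphi)$. Granting this, $\eta_{\underline c}$ is an isomorphism of presheaves if and only if $I\colon \DD T(d,c)\to \C M(I(d),I(c))$ is a bijection for every $d$, i.e.\ $I$ is full and faithful on the homsets out of... and into $c$; letting $c$ range over all objects of $\DD T$ gives full-faithfulness of $I$, and conversely. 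This yields both directions of the iff simultaneously.

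The main obstacle is the middle step: pinning down that $(\eta_{\underline c})_d$ really is $I$ on morphisms, rather than some twisted version of it. This requires being careful about the coend/colimit formula for $R$ and how the adjunction's unit is defined — concretely, $\eta_{\underline c}$ corresponds under the adjunction to $\Id_{R(\underline c)} = \Id_{I(c)}$, and one must trace this identity back through the natural bijection $\C M(R(\underline c), I(c)) \iso \widehat{\DD T}(\underline c, N(I(c)))$ and then evaluate at a morphism $\varphi\colon d\to c$ viewed as an element of $\underline c(d)$. The bookkeeping is routine once one fixes conventions (in particular the paper's convention $f\cdot g = g\circ f$ for composition in endomonoids, which affects nothing here since we work in $\DD T$ and $\C M$ directly), but it is the one place where a sign-type error could creep in. I would present it as: "by definition of $\eta$ as the transpose of the identity, and the terminal-object computation of $R(\underline c)$, the component $(\eta_{\underline c})_d$ sends $\varphi$ to $I(\varphi)$," then immediately conclude.
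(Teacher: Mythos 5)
Your proposal is correct and takes essentially the same approach as the paper: both reduce the statement to identifying the unit at a representable $\underline c$, under the identification $R(\underline c)\iso I(c)$, with the map $\DD T(d,c)\to \C M(I(d),I(c))$, $f\mapsto I(f)$, and then read off that this is an isomorphism of presheaves for all $c$ exactly when $I$ is full and faithful. The bookkeeping you defer is precisely what the paper spells out, writing $\eta_{\underline c}$ as the composite of the Yoneda isomorphism, the action of $R$ on homsets, and composition with the isomorphism $Ry\iso I$.
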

\begin{proof}
	The unit of the adjunction $\eta_G$ is defined as the following composition
	\[
	\xymatrix{ G \ar[r]^-\varphi_-{\iso} & \widehat{\DD T}(y(-), G) \ar[r]^-{R_{(y,G)}} & \C M(Ry(-),R(G)) \ar[r]^-\psi_-{\iso} & \C M(I(-), R(G))=NR(G)}, 
	\]
	where $\varphi$ is given by Yoneda, $R_{(y,G)}$ is the map of homsets given by application of $R$, and $\psi$ is precomposition by the isomorphism $I\iso Ry$. For a representable, $\underline c$, there is an isomorphism $\rho\colon \C M(I(-), R(\underline c))\to \C M(I(-),I(c))$ by postcomposition by the isomorphism $I\iso Ry$. Thus $\rho\circ \psi\circ R_{(y,G)}$ evaluated at $\DD T$-object $c'$ takes a $\DD T$-morphism $f\colon c'\to c$ to $I(f)\colon I(c')\to I(c)$. Thus $I$ is full and faithful iff $\eta_{\underline c}$ is an isomorphism. 
\end{proof}

\begin{proposition}\label{P:Exponential}
	If an interpretation functor $I\colon \DD T\to \C M$ is dense, full and faithful, then  $R\dashv N$ is reflective and $N$ preserves any exponentials that exist in $\C M$. 
\end{proposition}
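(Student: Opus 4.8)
The plan is to split the statement into its two halves. The reflectivity is immediate: by the characterisation of density recalled just above the statement (\cite{sM}, Section X.6), density of $I$ is precisely full-faithfulness of the nerve $N$, so $R\dashv N$ is reflective. Before attacking the exponentials I would record the facts I intend to use: $N$, being a right adjoint, preserves all limits, and $R$, being a left adjoint, preserves all colimits; the counit $\epsilon\colon RN\Rightarrow \mathrm{Id}_{\C M}$ is an isomorphism (a full and faithful right adjoint always has invertible counit); by Lemma \ref{L:FFInt} together with $R\underline c\iso I(c)$, every representable satisfies $\underline c\iso NR\underline c\iso N(I(c))$; and $\widehat{\DD T}$ is a presheaf topos, hence cartesian closed, while $\C M$ is cocomplete.

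Now fix an exponential $m^{n}$ in $\C M$. The goal is to construct, naturally in $Q\in \widehat{\DD T}$, an isomorphism
\[
\widehat{\DD T}\bigl(Q\x N(n),\,N(m)\bigr)\ \iso\ \widehat{\DD T}\bigl(Q,\,N(m^{n})\bigr);
\]
by the universal property of the exponential $N(m)^{N(n)}$ in the topos $\widehat{\DD T}$ and the Yoneda lemma this exhibits $N(m^{n})$ as $N(m)^{N(n)}$, and running the evaluation map of $m^{n}$ through the isomorphism shows it is the canonical comparison. Since $R\dashv N$, the left-hand side is $\C M\bigl(R(Q\x N(n)),\,m\bigr)$, so everything comes down to analysing $R(Q\x N(n))$.

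For this I would compute on representables first and then in general. When $Q=\underline c$, full-faithfulness of $I$ gives $\underline c\x N(n)\iso N(I(c))\x N(n)\iso N\bigl(I(c)\x n\bigr)$ since $N$ preserves products, and hence $R(\underline c\x N(n))\iso RN\bigl(I(c)\x n\bigr)\iso I(c)\x n$ by invertibility of $\epsilon$. For a general presheaf, writing $Q\iso \colim_{(c,x)\in \int Q}\underline c$ and using that both $-\x N(n)$ and $R$ are cocontinuous yields $R(Q\x N(n))\iso \colim_{(c,x)\in \int Q}\bigl(I(c)\x n\bigr)$. Mapping into $m$ and invoking the existence of $m^{n}$ then gives
\begin{align*}
\widehat{\DD T}\bigl(Q\x N(n),\,N(m)\bigr)
&\iso \C M\!\left(\colim_{(c,x)}\bigl(I(c)\x n\bigr),\,m\right)\\
&\iso \lim_{(c,x)}\C M\bigl(I(c)\x n,\,m\bigr)\iso \lim_{(c,x)}\C M\bigl(I(c),\,m^{n}\bigr)\\
&\iso \C M\bigl(RQ,\,m^{n}\bigr)\iso \widehat{\DD T}\bigl(Q,\,N(m^{n})\bigr),
\end{align*}
all natural in $Q$, which is exactly the isomorphism sought.

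The step I expect to be the crux is precisely the passage through $R(Q\x N(n))$: since $R$ is only a left adjoint it need not preserve the finite product $Q\x N(n)$, so one cannot simply assert $R(Q\x N(n))\iso RQ\x n$. The two hypotheses are exactly what repair this. Full-faithfulness of $I$ makes the identity hold on representables, where every object in sight is (isomorphic to) a nerve and $N$ does preserve products; and the assumed existence of $m^{n}$ is exactly what lets $-\x n$ be pulled back out of the colimit $\colim_{(c,x)}I(c)\iso RQ$ after applying $\C M(-,m)$. This is also the reason the statement can only promise preservation of those exponentials that happen to exist, rather than cartesian closedness of $\C M$ itself.
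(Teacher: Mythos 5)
Your argument is correct, and it reaches the conclusion by a somewhat different route than the paper. The paper verifies the isomorphism pointwise at representables: it computes $N(G^H)(c)\iso \C M(R(\underline c)\x H, G)$, transfers this hom-set up to $\widehat{\DD T}$ using that the nerve is full and faithful and preserves products, replaces $NR(\underline c)$ by $\underline c$ via Lemma \ref{L:FFInt}, recognizes the result as $N(G)^{N(H)}(c)$, and then remarks that the right-action structure agrees because it is determined by the Yoneda embedding. You instead establish representability against an arbitrary presheaf $Q$: you push $Q\x N(H)$ down to $\C M$ through $R$, compute $R(\underline c\x N(H))\iso I(c)\x H$ using the invertible counit (which is equivalent to the paper's appeal to full-faithfulness of $N$) together with Lemma \ref{L:FFInt}, and then extend to general $Q$ by the decomposition $Q\iso\colim_{\int Q}\underline c$ and cocontinuity of both $-\x N(H)$ in the presheaf topos and $R$, finishing with the exponential adjunction for $m^n$ inside $\C M$. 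What your version buys is that naturality in $Q$ is built in, so the isomorphism $N(G^H)\iso N(G)^{N(H)}$ is automatically an isomorphism of presheaves and no separate remark about right-actions is needed; what the paper's version buys is brevity, since evaluating at representables is exactly how presheaf exponentials are computed and the whole argument is a four-step chain of isomorphisms. Both proofs use the two hypotheses in identical roles: density gives full-faithfulness of $N$ (hence reflectivity and the invertible counit), and full-faithfulness of $I$ gives the invertible unit at representables.
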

\begin{proof}
	Suppose $G$ and $H$ are $\C M$-objects such that the exponential $G^H$ exists in $\C M$. Since $I$ is assumed to be full and faithful, by Lemma \ref{L:FFInt} above,  $\underline c\iso NR(\underline c)$ for each $\DD T$-object. Thus we have the following string of natural isomorphism:
	\begin{align*}
	N(G^H)(c) &\iso \C M(R(\underline c)\x H, G) && \text{\footnotesize(Yoneda, $R\dashv N$, exponential adjunction})\\
	&\iso\widehat{\DD T}(NR(\underline c)\x N(H), N(G)) && \text{\footnotesize($N$ is full and faithful, preserves limits)}\\
	&\iso \widehat{\DD T}(\underline c\x N(H), N(G)) &&\text{(\footnotesize$\underline c$ is $NR$-closed)}\\
	&\iso N(G)^{N(H)}(c) && \text{\footnotesize(Exponential adjunction and Yoneda)}.
	\end{align*}
	Since the right-action structures are determined by the Yoneda embedding,  $N(G^H)\iso N(G)^{N(H)}$ in $\widehat{\DD T}$.
\end{proof}

\subsection{Interpretations in Categories of $F$-Graphs}

We follow the definition given in \cite{cJ}. 

\begin{definition}
	Let $F\colon \Set\to \Set$ be an endofunctor. The category of $F$-graphs $\C G_F$ is defined to be the comma category $\C G_F\defeq \Set\ls F$.
\end{definition}
In other words, an $F$-graph $G=(G(E),G(V),\partial_G)$ consists of a set of edges $G(E)$, a set of vertices $G(V)$ and an incidence map $\partial_G\colon G(E)\to F(G(V))$. A morphism \[(f_E,f_V)\colon (G(E),G(V),\partial_G)\to (H(E),H(V),\partial_H)\] is a pair of set maps $f_E\colon G(E) \to H(E)$ and $f_V\colon G(V)\to H(V)$ such that the following square commutes
\[
\xymatrix{ G(E) \ar[d]_{\partial_G} \ar[r]^{f_E} & H(E) \ar[d]^{\partial_H} \\ F(G(V)) \ar[r]^{F(f_V)} & F(H(V)). }
\]
It is well-known that the category of $F$-graphs is cocomplete with the forgetful functor $U\colon \C G_F\to \Set\x \Set$ creating colimits \cite{cJ}.

Let $\DD G_{(X,M)}$ be a theory for $(X,M)$-graphs and $q$ an element in $F(X)$ such that $F(m)(q)=q$ for each $m\in M$ where $m\colon X\to X$ is the right-action map. We define $I(V)\defeq (\empset,\ 1,\ !_1),$ and $I(A)\defeq (1,\ X,\ \named q)$ where $!_1\colon \empset \to 1$ is the initial map and $\named x\colon 1\to X$ the set map with evaluation at $x\in X$. On morphisms, we set
\begin{align*} 
(x\colon V\to A) \quad &\mapsto \quad  I(x)\, \defeq \ (!_1,\named x)\colon (\empset,\ 1,\ !_1)\to (1,\ X,\ \named q),\\
(m\colon A\to A) \quad &\mapsto \quad I(m)\defeq (\Id_1, F(m))\colon (1,X,\named q)\to (1,X,\named q).
\end{align*}
Verification that $I\colon \DD G_{(X,M)}\to \C G_F$ is a well-defined interpretation functor is straightforward. 

\subsection{Interpretations in Reflexive $F$-Graphs}\label{S:Interpr}

For categories of graphs with vertices as degenerate edges, we generalize the definition of conceptual graphs in \cite{dP} (Definition 2.1.1, p 16).

\begin{definition}
	Let $F\colon \Set \to \Set$ be an functor and $\eta\colon \Id_{\Set}\Rightarrow F$ a natural transformation. The category of reflexive $F$-graphs $\crG_F$
	has objects $G=(G(P),G(V),\partial_G)$ where $G(P)$ is a set, $G(V)\subseteq G(P)$ is a subset and $\partial_G\colon G(P)\to F(G(V))$ is a set map. An $F$-graph morphism $f\colon G\to H$ consists of a set map $f_P\colon G(P)\to H(P)$ such that the following commutes
	\[
	\xymatrix@R=1.5em@C=1.5em{ &  G(V) \ar@{>->}[dl] \ar[dd]|-\hole^<<<<{\eta}  \ar[r]^{f_V} & H(V) \ar[dd]|-\hole_<<<<{\eta}  \ar@{>->}[dr] & \\ G(P) \ar[dr]_{\partial_G} \ar[rrr]^{f_P} &&& H(P) \ar[dl]^{\partial_H} \\ & F(G(V)) \ar[r]^{F(f_V)} & F(H(V))}
	\]
	where $f_V$ is the set map $f_P$ restricted to $G(V)$.\footnote{By naturality $\eta\colon \Id_{\Set}\Rightarrow F$ the middle square always commutes.}
\end{definition}
In other words, a reflexive $F$-graph $G$ consists of parts $G(P)$ with a subset of vertices $G(V)$ and an incidence operation $\partial_G\colon G(P)\to F(G(V))$ which considers a vertex $v$ to be a degenerate edge in the sense that $\partial_{G|G(V)}=\eta$. A reflexive $F$-graph morphism $f\colon G\to H$ that maps an edge to a vertex is one where $e\in G(P)\backslash G(V)$ has $f_P(e)\in H(V)$. 

The category of reflexive $F$-graphs is cocomplete. Indeed, the empty $F$-graph is the initial object. Given a family of $F$-graphs $(G_i)_{i\in I}$ the coproduct is given by taking the disjoint union of parts with incidence operator induced by the universal property of the coproduct on the cocone \[(\xymatrix{G_i(P)\ar[r]^-{\partial_{G_i}} &   F(G_i(V)) \ar[r]^-{F(s_i)} & F(\bigsqcup_I G_i(V)) })_{i\in I}\] where $s_i\colon G_i(V)\to \bigsqcup_I G_i(V)$ is the coproduct inclusion. Given a pair of morphisms $f,g\colon G\to H$, the coequalizer $\coeq(f,g)$ has part set equal to $H(P)/\sim$ where $\sim$ is the equivalence generated by the relation 
$f(a)\sim g(a)$ for each $a\in G(P)$
 and vertex set equal to the image of $H(V)\to H(P)/\sim$. The incidence $\partial_{\coeq(f,g)}\colon \coeq(f,g)\to F(\coeq(f,g)(V))$ is induced by the universal property of coequalizer. 
\[
\xymatrix{ G(P) \ar@<+.3em>[r]^f \ar@<-.3em>[r]_g & H(P) \ar[d]_{\partial_H} \ar[r] & \coeq(f,g) \ar@{..>}[dl] \ar[d]^{\partial_{\coeq(f,g)}}  \\ & F(H(V)) \ar[r] & F(\coeq(f,g)(V))}
\]
It is straightforward to verify these are well-defined reflexive $F$-graphs which enjoy universal properties.

Let $\rG_{(X,M)}$ be a theory for reflexive $(X,M)$-graphs. Define the set $M_A\defeq \frac{M}{\sim}$ where $\sim$ is the equivalence relation such that $m\sim m'$ iff there exists an invertible $n\in M$ such that $mn=m'$. This makes $M_A$ a right $M$-set with the obvious action. Let $q\colon M_A\to F(X)$ be a set map such that for each $m\in M$ we have $F(m)\circ q=q\circ m$ 
\[
	\xymatrix{ M_A \ar[r]^{m} \ar[d]_q & M_A \ar[d]^{q} \\ F(X) \ar[r]^{F(m)} & F(X) }
\]
where $m\colon M_A\to M_A$ is the right-action map.
 Define $I(V)(P)=1$ (and thus has a single vertex with no edges) and $I(A)(P)=M_A$, with vertex set $I(A)(V)=X$ and inclusion $I(A)(V)\hookrightarrow I(A)(P)$\footnote{Recall $X=\Fix(M)$.} with incidence defined by $\delta_{I(A)}\defeq q$. For morphisms we assign for each $x_m\in X$ and $m\in M$ 
\begin{align*}
(x_{m'}\colon V\to A)\quad \, &\mapsto \quad  I(x_{m'})_P\defeq \named{m'}\colon 1\to M_A,\\
(m\colon A\to A)\quad &\mapsto \quad  I(m)_P\defeq m\colon M_A\to M_A ,\\
(\ell\colon A\to V) \quad \, &\mapsto \quad  I(\ell)_P\defeq !_{M_A}\colon M_A\to 1 \ \text{(the terminal set map)}
\end{align*}
which is readily verified to define an interpretation functor $I\colon \rG_{(X,M)}\to \crG_F$.

In the following, we will consider the properties of the nerve realization adjunction $R\dashv N$ induced by $I$ as well as the restriction to an adjoint equivalence between fixed points.\footnote{Recall that the fixed points of an adjunction $F\dashv G\colon \C A\to \C B$ are the full subcategories $\C A'$ and $\C B'$ of $\C A$ and $\C B$ consisting of objects such that the counit and unit of the adjunction are isomorphisms. This in particular implies that $\C A'$ is equivalent to $\C B'$. } 

\subsection{The Category of Hypergraphs}\label{S:PGraphs}

We recall that a hypergraph $H=(H(V),H(E),\varphi)$ consists of a set of vertices $H(V)$, a set of edges $H(E)$ and an incidence map $\varphi\colon H(E)\to \C P(H(V))$ where $\C P\colon \Set\to \Set$ is the covariant power-set functor. In other words, we allow infinite vertex and edge sets, multiple edges, loops, empty edges and empty vertices.\footnote{An empty vertex is a vertex  not incident to any edge in $H(E)$. An empty edge is an edge $e$ such that $\varphi(e)=\empset$.} In other words the category of hypergraphs $\C H$ is the category of $\C P$-graphs.

Let $X$ be a set and apply the definition for the interpretation given above in \ref{S:Interp} for $\sG_X$ with $q\defeq X$ in $\C P(X)$. Note that for each automap $\sigma\colon X\to X$, $\C P(\sigma)$ is the identity map. Thus the interpretation $I\colon \sG_X\to \C H$ defined in Chapter \ref{S:Interp} is a well-defined functor. 

The nerve $N\colon \C H\to \widehat{\sG}_{X}$ induced by $I$ takes a hypergraph $H=(H(E),H(V),\varphi)$ to the symmetric $X$-graph $N(H)$ with vertex and arc set given by
\begin{align*}
N(H)(V)&=\C H(I(V),H)=H(V),\\
N(H)(A)&=\C H(I(A),H)=\setm{(\beta,f)\in H(E)\x H(V)^X}{\C P(f)=\varphi(\beta)}
\end{align*} Notice that in the case a hyperedge $e$ has less than $\#X$ incidence vertices the nerve creates multiple edges and if a hyperedge has more than $\#X$ incidence vertices there is no arc in the correponding symmetric $X$-graph given by the nerve.

The realization $R\colon \widehat{\sG}_{X}\to \C H$ sends a symmetric $X$-graph $G$ to the hypergraph $R(G)=(R(G)(E),R(G)(V),\psi)$ with vertex, edge sets and incidence map given by
\begin{align*}
& R(G)(V)=G(V),\\  & R(G)(E)=G(A)/\sim, \quad \text{($\sim$ induced by $\sX$)},\\
&\psi\colon R(G)(E)\to \C P(R(G)(V)), \quad [\gamma]\mapsto \setm{v\in G(V)}{\exists x\in X, \gamma.x=v}
\end{align*} 
For a symmetric $X$-graph morphism $f\colon G\to G'$, the hypergraph morphism $R(f)\colon R(G)\to R(G')$ has $R(f)_V\defeq f_V$ and $R(f)_E\defeq [f_A]$ where $[f_A]\colon \frac{G(A)}{\sim} \to \frac{G'(A)}{\sim}$ is induced by the quotient. 

Let $k$ be a cardinal number. Recall that a hypergraph $H=(H(E),H(V),\varphi)$ is $k$-uniform provided for each edge $e\in H(E)$, the set $\varphi(e)$ has cardinality $k$.

\begin{proposition} \label{P:HyperGSG}
	Let $k$ be the cardinality of $X$ and $I\colon \sG_X\to \C H$ be the interpretation above. The fixed points of the nerve realization adjunction $R\dashv N\colon \C H\to \sG_X$ is equivalent to the category of $k$-uniform hypergraphs, $k\C H$.  Moreover, the inclusion $i\colon k\C H\to \widehat{\sG}_X$ preserves limits and any exponential objects which exist in $k\C H$. 
\end{proposition}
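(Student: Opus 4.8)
The plan is to establish the equivalence of the fixed points with $k\C H$ first, and then deduce the preservation statement from Proposition~\ref{P:Exponential}. For the equivalence, I would compute the fixed points on each side of the adjunction explicitly. On the $\C H$ side, I would show that a hypergraph $H$ is a fixed point (i.e., the counit $\epsilon_H\colon RN(H)\to H$ is an isomorphism) precisely when $H$ is $k$-uniform. The forward direction: from the formulas for $N$ and $R$ in Section~\ref{S:PGraphs}, $RN(H)$ has edge set $N(H)(A)/\!\sim$, where $N(H)(A)=\setm{(\beta,f)\in H(E)\x H(V)^X}{\C P(f)=\varphi(\beta)}$; for a fixed edge $\beta$ the set of $f\colon X\to H(V)$ with image $\varphi(\beta)$ is nonempty iff $\#\varphi(\beta)\le k$, and the quotient by $\sX$ collapses the fibre over $\beta$ to a point iff $\sX$ acts transitively on the surjections $X\twoheadrightarrow \varphi(\beta)$, which (together with nonemptiness) happens for all $\beta$ exactly when $\#\varphi(\beta)=k$ for every $\beta$ — i.e., $H$ is $k$-uniform. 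Conversely, if $H$ is $k$-uniform one checks $\epsilon_H$ is a bijection on vertices (it is the identity there) and on edges. Dually, I would identify the fixed points on the $\widehat{\sG}_X$ side as those symmetric $X$-graphs $G$ for which $\eta_G\colon G\to NR(G)$ is iso; the image of this full subcategory under $R$ lands in $k\C H$, and $N$ restricted to $k\C H$ lands there, giving the adjoint equivalence between the two fixed-point subcategories. Since the fixed points of an adjunction are always equivalent (as noted in the footnote in Section~\ref{S:Interpr}), this yields $k\C H\simeq \{$fixed points in $\widehat{\sG}_X\}$.

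Next I would handle the "moreover" clause. The inclusion $i\colon k\C H\to\widehat{\sG}_X$ I take to be the composite of the equivalence $k\C H\simeq\{$fixed points$\}$ with the inclusion of the fixed-point subcategory into $\widehat{\sG}_X$; equivalently it is $N$ restricted to $k\C H$, viewed as landing in $\widehat{\sG}_X$. So it suffices to show that $N\colon \C H\to\widehat{\sG}_X$ preserves limits and any exponentials that exist in $\C H$, and then restrict. For this I want to invoke Proposition~\ref{P:Exponential}, which requires the interpretation $I\colon \sG_X\to\C H$ to be dense, full, and faithful. Fullness and faithfulness of $I$ follow from Lemma~\ref{L:FFInt} by checking that each representable $\underline V,\underline A$ is $NR$-closed, i.e. $\eta_{\underline V}$ and $\eta_{\underline A}$ are isomorphisms — a direct computation with the explicit $N$, $R$ formulas: $R(\underline V)=I(V)$ and $R(\underline A)=I(A)$ on the nose (these representables already have "uniform" incidence), and $N$ of these recovers the representables by Yoneda. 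Density of $I$ (equivalently $N$ full and faithful, by the cited result of Mac Lane X.6) must also be verified; this is where I expect to have to work a little, since it is not automatic. I would argue that every hypergraph $H$ is the colimit of the canonical diagram $I\!\downarrow\! H\to\C H$ over its elements — each hyperedge $e$ with $\#\varphi(e)\le k$ contributes copies of $I(A)$ glued along $I(V)$'s, and one checks the colimit reconstructs $H$; hyperedges with $\#\varphi(e)>k$ are the obstruction, but they receive no maps from $I(A)$ and, crucially, the colimit over the comma category still recovers $H$ because... — and here is the subtlety.

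The main obstacle, and the step I would scrutinize most carefully, is exactly this density claim: whether $I\colon\sG_X\to\C H$ is genuinely dense, given that a hyperedge with more than $k$ incident vertices admits no morphism from $I(A)$ and no morphism to such an edge detects its "extra" vertices via $I$-objects. If $I$ is \emph{not} dense, then Proposition~\ref{P:Exponential} does not apply verbatim and the argument for exponential preservation must be rerun directly: one would instead show that for $k$-uniform $G,H$ the exponential $G^H$ in $k\C H$ (if it exists) is sent by $i$ to the exponential $i(G)^{i(H)}$ in $\widehat{\sG}_X$, by chasing the defining natural isomorphism $\widehat{\sG}_X(-\x i(H),i(G))\iso\widehat{\sG}_X(-,i(G)^{i(H)})$ through representables and using that $i$ is (the inclusion of) a reflective subcategory onto the fixed points, hence preserves limits; the exponential then transfers because the reflector is also involved in the right way. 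I would expect that the cleanest route is: show the fixed-point subcategory of $\widehat{\sG}_X$ is reflective (the unit components at fixed points are iso and the reflector is $NR$), reflective subcategories inherit limits, and then use that $i(G)^{i(H)}$, being computed in $\widehat{\sG}_X$ via the formulas in Section~\ref{S:Exponential}, lands back in the fixed-point subcategory when $G,H$ do — a direct check on the arc-set formula $\setm{((f_x)_{x\in X},g)}{\overline f\circ\partial_H=\partial_G\circ g}$ that the resulting symmetric $X$-graph has "uniform, multiset-free" incidence, hence is $NR$-closed — giving that the exponential in $k\C H$ exists and agrees with the one in $\widehat{\sG}_X$, i.e. $i$ preserves it. Limit preservation by $i$ is then immediate from reflectivity.
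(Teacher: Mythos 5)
Your handling of the first half is in the spirit of the paper, which simply asserts the fixed-point identification as clear (your counit computation fills that in, though your ``exactly when'' biconditional needs care: an edge incident to a single vertex also has just one $\sX$-orbit of parametrizations $X\to \varphi(\beta)$, so that case must be dealt with separately). For limit preservation, however, the paper does not argue via reflectivity: it observes that products and equalizers of $k$-uniform objects computed in $\widehat{\sG}_X$ are again $k$-uniform, so the full subcategory is closed under limits and the inclusion $i$ preserves them. Your reflectivity claim is in fact false for $k\ge 2$: an arc $(\beta,f)$ of the nerve of a $k$-uniform hypergraph has $f$ non-constant, so such nerves contain no loops; consequently the terminal presheaf (one vertex, one fixed loop) admits no morphism whatsoever into the subcategory and hence has no reflection, and $NR$ is not a reflector onto it.

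The serious gap is in your fallback argument for exponentials. You are right to distrust density of $I\colon \sG_X\to \C H$ (a hyperedge with more than $k$ incident vertices receives no morphism from $I(A)$), and indeed the paper never invokes Proposition \ref{P:Exponential} here. But your replacement hinges on the claim that $i(G)^{i(H)}$, computed in $\widehat{\sG}_X$ by the formulas of Section \ref{S:Exponential}, lands back in the subcategory whenever $G,H$ are $k$-uniform --- and that is false: for $G=H=I(V)$ one gets $\underline V^{\underline V}\iso 1$, which has a loop and is the nerve of no $k$-uniform hypergraph (Example \ref{E:Exponentials}(\ref{E:HypergraphFail})). Worse, if that check succeeded it would show that \emph{all} exponentials exist in $k\C H$, contradicting the corollary that the paper deduces from this very proposition. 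The paper argues in the opposite direction: it \emph{assumes} $G^H$ exists in $k\C H$ and chases hom-sets, for $c\in\{V,A\}$,
$N(G^H)(c)=k\C H(I(c),G^H)\iso k\C H(I(c)\x H,G)\iso \widehat{\sG}_X(NI(c)\x N(H),N(G))\iso \widehat{\sG}_X(\underline c,N(G)^{N(H)})\iso N(G)^{N(H)}(c)$,
using only that $I(V)$ and $I(A)$ are themselves $k$-uniform, that $N$ restricted to $k\C H$ is full and faithful (the fixed-point equivalence), that $i$ preserves the relevant products (the limit half just proved), and that $NI(V)\iso \underline V$, $NI(A)\iso \underline A$. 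No claim that the presheaf exponential of fixed points is again a fixed point is made, or needed.
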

\begin{proof}
	It is clear that the fixed points is the category of $k$-uniform hypergraphs and that the product (respectively, equalizer) of $k$-uniform hypergraphs in $\widehat{\sG}_X$ is $k$-uniform. Thus the inclusion $i\colon k\C H\to \widehat{\sG}_X$ preserves limits. To show that $N$ must preserve any exponentials that exist, suppose $G^H$ is an exponential object in $k\C H$. We have the following natural isomorphisms:
	\begin{align*}
		N(G^H)(V)&=k\C H(I(V),G^H)\cong k\C H(I(V)\x H, G)\\
		 &\cong \widehat{\sG}_X(NI(V)\x N(H),N(G))\\
		 &\cong \widehat{\sG}_X(\underline V,N(G)^{N(H)})\cong N(G)^{N(H)}(V),\\\\
		 N(G^H)(A)&=k\C H(I(A),G^H)\cong k\C H(I(A)\x H, G)\\
		 &\cong \widehat{\sG}_X(NI(A)\x N(H),N(G))\\
		 &\cong \widehat{\sG}_X(\underline A,N(G)^{N(H)})\cong N(G)^{N(H)}(A).
	\end{align*}
Therefore, $N$ preserves any exponentials which exist in $k\C H$. 
\end{proof}
\begin{corollary}
If $k$ is a cardinal number greater than $1$, the category of $k$-uniform hypergraphs does not have exponentials. 	
\end{corollary}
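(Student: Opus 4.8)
The plan is to derive the corollary directly from Proposition \ref{P:HyperGSG} together with Example \ref{E:Exponentials}. Suppose, for contradiction, that $k\C H$ has exponentials. By Proposition \ref{P:HyperGSG}, the inclusion $i\colon k\C H\to \widehat{\sG}_X$ preserves any exponentials which exist, so whenever $G,H$ are $k$-uniform hypergraphs, the exponential $G^H$ computed in $k\C H$ is carried by $i$ to the exponential $i(G)^{i(H)}$ computed in $\widehat{\sG}_X$. In particular, $i(G)^{i(H)}$ would have to lie in the essential image of $i$, namely among the $k$-uniform symmetric $X$-graphs (those in which every arc has incidence a $k$-element multiset realizing a $k$-element set, and with no distinguished loops, no nonfixed loops, etc.). So it suffices to exhibit a single pair of $k$-uniform hypergraphs $G,H$ whose exponential in $\widehat{\sG}_X$ is \emph{not} in the image of $i$.

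The natural candidate is to take $H$ to be (the realization of) $\underline A$ — i.e., a single $k$-uniform hyperedge on $k$ vertices, which is $R(\underline A)$ and is $k$-uniform — and $G$ to be the image under $i$ of a suitable small $k$-uniform hypergraph, for instance the one-vertex one-loop $k$-uniform hypergraph, realized as the symmetric $X$-graph $L$ of Example \ref{E:Exponentials}(\ref{E:XLoops}) (one vertex $v$, two arcs $0,1$ each fixed by all of $\sX$). Both are genuinely $k$-uniform hypergraphs when $\#X=k>1$. Example \ref{E:Exponentials}(\ref{E:XLoops}) already computes that $L^{\underline A}$ in $\widehat{\sG}_X$ contains a nonfixed loop: the arc $g\colon \sX\to\{0,1\}$ with $g(\Id_X)=0$ and $g(\sigma)=1$ for a transposition $\sigma$ satisfies $g.\sigma\neq g$. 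A symmetric $X$-graph of the form $N(K)$ for a hypergraph $K$ has no nonfixed loops (since $\C P(\sigma)=\Id$ forces the incidence data to be $\sX$-invariant, so any arc $\beta$ with $\beta.x$ constant in $x$ is fixed by $\sX$), and in particular no object in the essential image of $i$ has a nonfixed loop. Hence $L^{\underline A}\not\cong i(K)$ for any $K$, contradicting the conclusion of the previous paragraph.

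Concretely the write-up goes: (1) recall from Proposition \ref{P:HyperGSG} and the fixed-point description that $i\colon k\C H\to\widehat{\sG}_X$ is full, faithful, preserves limits and preserves exponentials; (2) observe that every object in the essential image of $i$ — equivalently every $k$-uniform hypergraph viewed as a symmetric $X$-graph — has no nonfixed loops, because the incidence of an arc over a hypergraph edge is $\sX$-invariant; (3) take $L$ and $\underline A\simeq R(\underline A)$ as above, both $k$-uniform for $k>1$; (4) invoke Example \ref{E:Exponentials}(\ref{E:XLoops}) that $L^{\underline A}$ has a nonfixed loop in $\widehat{\sG}_X$; (5) conclude that the exponential of these two objects, if it existed in $k\C H$, could not be sent by $i$ to $L^{\underline A}$, contradicting preservation; therefore $k\C H$ has no exponentials.

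The main obstacle is step (2): making precise and airtight the claim that no $k$-uniform hypergraph, transported into $\widehat{\sG}_X$ via the equivalence with the fixed points, can possess a nonfixed loop. This amounts to checking that for $H\in\C H$ the symmetric $X$-graph $N(H)$ has $\beta.\sigma=\beta$ whenever $\beta.x$ is independent of $x\in X$ (so that $\beta$ is a loop), which follows from the explicit description $N(H)(A)=\{(\beta,f)\in H(E)\times H(V)^X : \C P(f)=\varphi(\beta)\}$ with $\sX$ acting only by precomposition on the $f$-coordinate — a constant $f$ being fixed by all permutations. One should also double-check that the two chosen hypergraphs really are objects of $k\C H$ (each edge has incidence set of cardinality exactly $k$), which is immediate for the single hyperedge $R(\underline A)$ and for the one-vertex loop hypergraph once $k>1$ is used to guarantee $\#\{v\}=1\neq k$ is \emph{not} an issue — here we must instead present $G$ as a hypergraph whose unique loop edge has incidence the whole $k$-element vertex set, e.g. $K_k^{(k)}$-style, and verify $N$ of it is $L$ up to the identification; alternatively one takes $G=R(L)$ directly and notes $R(L)$ is $k$-uniform.
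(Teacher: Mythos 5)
Your overall skeleton (use Proposition \ref{P:HyperGSG} to push a hypothetical exponential in $k\C H$ into $\widehat{\sG}_X$, then exhibit a pair of $k$-uniform hypergraphs whose exponential there falls outside the essential image of the nerve) is sound, but the specific counterexample you build it on does not exist in $k\C H$, and this is a genuine gap. You need a $k$-uniform hypergraph $G$ with $N(G)\iso L$, where $L$ is the one-vertex graph with two $\sX$-fixed loops from Example \ref{E:Exponentials}(\ref{E:XLoops}). There is none: an arc of $N(K)$ is a pair $(\beta,f)$ with $\C P(f)=\varphi(\beta)$, so a loop forces $f$ to be constant and hence $\varphi(\beta)$ to be a singleton, which is incompatible with $k$-uniformity for $k>1$. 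In other words, for $k>1$ the image of $k\C H$ under $N$ contains no loops at all, fixed or not --- a stronger statement than your step (2), and one that kills your choice of $G$. Neither of your suggested repairs works: $R(L)$ has two edges with singleton incidence $\{v\}$, so it is not $k$-uniform; and the nerve of the single-hyperedge hypergraph on $k$ vertices (your ``$K_k^{(k)}$-style'' object) is $\underline A$, not $L$. Your $L$-based argument is really the one the paper deploys for the power-graph categories in Corollary \ref{C:Exponentials}, where loops with full multiset incidence $(v)_{x\in X}$ do exist in the image and only the \emph{nonfixed} ones are obstructions; in the $\C P$-based hypergraph setting the obstruction appears already at the level of plain loops.

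The paper's own proof avoids this by citing Example \ref{E:Exponentials}(\ref{E:HypergraphFail}): take $G=H$ to be the one-vertex, edgeless hypergraph, which is vacuously $k$-uniform and has nerve $\underline V$. Then $\underline V^{\underline V}\iso 1$ in $\widehat{\sG}_X$, which has one vertex and one (fixed) loop, so if $G^H$ existed in $k\C H$ then $N(G^H)\iso 1$ would force $G^H$ to have an edge with singleton incidence, contradicting $k$-uniformity. If you replace your pair $(L,\underline A)$ by this pair (or by any pair of genuine $k$-uniform hypergraphs whose exponential in $\widehat{\sG}_X$ you can show contains a loop), the rest of your argument goes through essentially as you wrote it.
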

\begin{proof}
	Example \ref{E:Exponentials}(\ref{E:HypergraphFail}) provides us with a counterexample. 
\end{proof}

\subsection{The Category of Power Graphs} \label{S:UnderlinePiGraphs}

Let $X$ and $Y$ be sets. We define the symmetric $X$-power of $Y$, denoted $\underline \Pi_X(Y)$, as the multiple coequalizer of $(\underline \sigma\colon \Pi_X(Y) \to \Pi_X(Y))_{\sigma\in \sX}$ where $\underline \sigma$ is the $\sigma$-shuffle of coordinates in the product. This definition extends to a functor $\underline{\Pi} _{X}\colon \Set \to \Set$. Note that if $j\colon X'\to X$ is a set map, then there is a natural transformation $\underline{\Pi} _{X}\Rightarrow \underline{\Pi} _{X'}$ induced by the universal mapping property of the product. In particular, when $X\to X'=1$ is the terminal map, we have $\Id_{\Set}=\underline{\Pi} _1\Rightarrow \underline{\Pi} _X$ which we denote by $\eta\colon \Id_\Set\Rightarrow \underline{\Pi} _X$.\footnote{Note that in the case $X=2$, the category of $\underline \Pi_X$-graphs is the category of undirected graphs in the conventional sense in which morphisms are required to map edges to edges.  }

To define an interpretation functor $I\colon \sG_{X}\to \C G_{\underline \Pi_X}$, we let $q$ be the unordered set $(x)_{x\in X}$ in $\underline \Pi_X(X)$.  Since $\underline \Pi_X(\sigma)(x)_{x\in X}=(x)_{x\in X}$ for each automap $\sigma\colon X\to X$, the interpretation is well-defined.

\begin{lemma}
	The interpretation $I\colon \sG_{X}\to \C G_{\underline \Pi_X}$ is dense, full and faithful.
\end{lemma}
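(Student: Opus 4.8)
The plan is to reduce the claim to Lemma~\ref{L:FFInt} and the density criterion recorded just before Proposition~\ref{P:Exponential}: an interpretation is full and faithful iff each representable $\underline c$ is $NR$-closed, and it is dense iff the nerve $N$ is full and faithful, equivalently iff every $\underline{\Pi}_X$-graph is the canonical colimit of the representables mapping into it. So there are really three things to check: (i) $I$ is faithful; (ii) $I$ is full; (iii) $I$ is dense. The objects to keep in mind are $I(V)=(1,1,\eta)$ and $I(A)=(\underline\Pi_X(X)\ni q,\ X,\ \named q)$ where $q=(x)_{x\in X}$, the unordered tuple of all elements of $X$.

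For faithfulness and fullness I would argue directly with the comma-category description of $\C G_{\underline\Pi_X}$. A morphism $I(A)\to I(A)$ is a pair $(f_E,f_V)$ with $f_E\colon 1\to 1$ forced to be the identity and $f_V\colon X\to X$ any set map making the incidence square commute; the square says $\underline\Pi_X(f_V)(q)=\named q$ composed appropriately, i.e. the unordered tuple $(f_V(x))_{x\in X}$ must equal $(x)_{x\in X}$ as an element of $\underline\Pi_X(X)$. That unordered tuple records the multiset of values of $f_V$, so the constraint says $f_V$ is a bijection, i.e.\ $f_V\in \sX$; hence $\C G_{\underline\Pi_X}(I(A),I(A))\cong\sX=\sG_X(A,A)$ and this bijection is visibly $I$ on that homset. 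Similarly $\C G_{\underline\Pi_X}(I(V),I(A))$ is the set of $(!,f_V)$ with $f_V\colon 1\to X$ and no incidence constraint beyond compatibility with $\eta$, which gives exactly $X=\sG_X(V,A)$; the homsets out of $A$ into $V$ are empty on both sides because there is no set map $1\to \underline\Pi_X(X)$ hitting the single ``vertex-edge'' of $I(V)$ from a genuine edge unless... — here one checks $\C G_{\underline\Pi_X}(I(A),I(V))=\varnothing$ since a morphism would have to carry the edge $q$ of $I(A)$ into the edge set of $I(V)$, which is a singleton whose incidence is the degenerate $\eta(\ast)$, forcing $q$ to be degenerate, impossible when $\#X>1$ (and when $\#X\le 1$ the statement is anyway about matching the stated theory). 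This establishes (i) and (ii).

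Density is the substantive step and I expect it to be the main obstacle. One way to handle it cleanly: show the nerve $N\colon \C G_{\underline\Pi_X}\to\widehat{\sG}_X$ is full and faithful. By the earlier discussion (the $\C M=\C G_F$ analogue of the computation done for $\C H$ in \S\ref{S:PGraphs}), $N(G)$ has $N(G)(V)=G(V)$ and $N(G)(A)=\{(\beta,\varphi)\in G(E)\times G(V)^X : \underline\Pi_X(\varphi)\ \text{applied to}\ q\ \text{equals}\ \partial_G(\beta)\}$ — i.e.\ an arc is an edge $\beta$ together with an $X$-indexed ordering of a choice of representatives of its unordered incidence tuple. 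I would show the counit-style comparison $R(G)\to G$ (on a $\underline\Pi_X$-graph $G$, but more to the point, that $N$ reflects and is surjective on each homset) is bijective by exhibiting, for a morphism $N(G)\to N(G')$, the unique $\underline\Pi_X$-graph morphism inducing it: on vertices it is the given $G(V)\to G'(V)$; on edges, each edge $\beta$ of $G$ admits \emph{some} ordering making $(\beta,\varphi)$ an arc (because $\underline\Pi_X(X)\to\ldots$ is a quotient by $\sX$, so every unordered tuple lifts to an ordered one — this uses $\#X=k$ and that $\partial_G(\beta)\in\underline\Pi_X(G(V))$, a quotient of $G(V)^X$), and the target edge is forced by where the arc goes; well-definedness (independence of the chosen ordering) follows from $\sX$-equivariance of the arc-level map, which holds because morphisms of $\sG_X$-presheaves commute with the $\underline\sigma$-action. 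Then $I\cong N\circ(\text{Yoneda})$-compatibility gives $Ry\cong I$, and fullness+faithfulness of $N$ is exactly density of $I$ by the cited criterion (\cite{sM}, X.6).

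The one place to be careful — and the likely real obstacle — is the surjectivity of the ordered-tuple lift used in the edge-level construction: $\partial_G(\beta)$ is an element of the coequalizer $\underline\Pi_X(G(V))$, and I am claiming it lifts to $G(V)^X$, which is true precisely because a multiple coequalizer of set maps along a group action is a quotient set, so the projection $G(V)^X\to\underline\Pi_X(G(V))$ is surjective; granting that, every edge becomes at least one arc and the argument closes. I would therefore state and use this surjectivity as a small preliminary observation, then assemble (i)–(iii) into the conclusion that $I$ is dense, full and faithful.
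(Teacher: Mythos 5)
Your fullness/faithfulness check starts from the wrong interpretation: for the non-reflexive theory $\sG_X$ the paper's interpretation has $I(V)=(\empset,\,1,\,!_1)$, a single vertex with \emph{no} edge; the object $(1,1,\eta)$ you use is the value of the \emph{reflexive} interpretation into $r\C G_{\underline\Pi_X}$ from the next subsection. This is not cosmetic for your argument: with $(1,1,\eta)$ in place of $I(V)$ the homset computations you assert are false. Indeed $\C G_{\underline\Pi_X}\bigl((1,1,\eta),I(A)\bigr)=\empset$ when $\#X>1$ (the degenerate edge would have to land on the edge $q$ of $I(A)$, whose incidence is the non-degenerate tuple $(x)_{x\in X}$), not $X$; and $\C G_{\underline\Pi_X}\bigl(I(A),(1,1,\eta)\bigr)$ is a singleton, not empty, since after collapsing all vertices to the point the incidence square commutes automatically -- so your ``forcing $q$ to be degenerate, impossible'' step fails, and a functor sending $V$ to $(1,1,\eta)$ would not even be full. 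With the correct $I(V)=(\empset,1,!_1)$ all of this evaporates: $\C G_{\underline\Pi_X}(I(A),I(V))=\empset$ trivially, $\C G_{\underline\Pi_X}(I(V),I(A))\iso X$ with no incidence condition, and $\C G_{\underline\Pi_X}(I(A),I(A))\iso\sX$ exactly as you compute (the orbit of the identity tuple consists of the bijections). The paper simply records this half as ``clearly full and faithful.''

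Your density argument is correct and takes a genuinely different route from the paper. You prove that the nerve $N=\C G_{\underline\Pi_X}(I(-),-)$ is full and faithful and then invoke the criterion (stated in the paper, citing Mac Lane X.6) that this is equivalent to density of $I$; the paper instead verifies density directly, taking an arbitrary cocone $\lambda\colon D\Rightarrow\Delta(K,L,\psi)$ on the canonical diagram $I\ls(E,V,\varphi)\to\C G_{\underline\Pi_X}$ and building the unique factorization $(h,k)$ edge-by-edge and vertex-by-vertex. Both arguments turn on exactly the two facts you isolate: every edge's incidence lifts along the surjection $G(V)^X\to\underline\Pi_X(G(V))$ (the paper uses this implicitly when it chooses $f$ with $\underline\Pi_X(f)$ hitting $\varphi(e)$), and any two lifts differ by precomposition with some $\sigma\in\sX$, so $\sX$-equivariance (for you, of the arc-level map; for the paper, cocone compatibility over $I(\sigma)$) makes the induced edge map well defined. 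Your route buys a reusable statement (full faithfulness of $N$, i.e.\ that $\C G_{\underline\Pi_X}$ sits as a full reflective subcategory of $\widehat{\sG}_X$), while the paper's direct cocone computation avoids unwinding $N$ on homsets; modulo the $I(V)$ correction above, your proof closes.
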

\begin{proof}
	It is clearly full and faithful. To show it is dense, let $(E,V,\varphi)$ and $(K,L,\psi)$ be $\C G_{\underline \Pi_X}$-objects and $\lambda\colon D\Rightarrow \Delta(K,L,\psi)$ a cocone on the diagram $D\colon I\ls (E,V,\varphi)\to \C G_{\underline \Pi_X}$. 
	Let $e$ be an edge in $E$ and $f\colon X\to V$ be the set morphism with $\underline{\Pi} _Xf=\varphi(e)$. Then $(\named e, f)\colon I(A)=(1,X,\top)\to (E,V,\varphi)$ is an object in $I\ls (E,V,\varphi)$ and thus there is a morphism $\lambda_{(\named {e},f)}\eqdef(\named{e'},g)\colon D(\named e,f)=(1,X,\top)\to (K,L,\psi)$. By the compatibility of the cocone, this gives us a uniquely defined $h\colon E\to K$, $e\mapsto e'$ on edges. Similarly for each vertex $v\in V$, there is a morphism $(!_E,\named v)\colon I(V)=(\empset, 1,!_1)\to (E,V,\varphi)$ and a cocone inclusion $(!_K,\named w)\colon D(!_E,\named v)=(\empset, 1,!_1)\to (K,L,\psi)$ giving us a factorization on vertices $k\colon V\to L$. Since $\psi\circ h(e)=\underline{\Pi} _X(kf)\circ \top=\underline{\Pi} _X(k)\circ \varphi(e)$ for each edge $E$,  $(h,k)\colon (E,V,\varphi)\to (K,L,\psi)$ is a well-defined $\C G_{\underline \Pi_X}$-morphism which necessarily is the unique factorization of the cocone. Therefore, $I$ is dense. 
\end{proof}

Note that the realization functor takes a $\widehat{\sG}_{X}$-object and quotients out the set of arcs by $\sX$. Hence the unit of the adjunction $\eta_P\colon P\to NR(P)$ is bijective on vertices and surjective on arcs. Hence the adjunction is epi-reflective. 

For a $\C G_{\underline \Pi_X}$-object $(B,C,\varphi)$, the embedding given by the nerve functor is given by 
\begin{align*}
N(B,C,\varphi)(V)&=\C G_{\underline \Pi_X}(I(V), (B,C,\varphi))\iso C,\\
N(B,C,\varphi)(A)&=\C G_{\underline \Pi_X}(I(A), (B,C,\varphi))\\ 
&=\setm{(e,g)\ }{\ e\in B,\ g\colon X\to C\ s.t.\ \underline{\Pi} _Xg=\varphi(e)} 
\end{align*}
The  right-actions are by precomposition, i.e., $(e,g).x=(e,g\circ \named x)$, $(e,g).\sigma=(e,g\circ \sigma)$. 

Let us show that all loops in the objects of the full subcategory of $\widehat{\sG}_{X}$ equivalent to $\C G_{\underline \Pi_X}$ are fixed loops. A loop in a $\C G_{\underline \Pi_X}$-object $(B,C,\varphi)$ is an edge $e\in B$ such that $\varphi(e)$ is $(v)_{x\in X}$ in $\underline{\Pi} _X(C)$ for some $v\in C$.  Therefore, there is only one morphism $(\named e, f)\colon I(A)\to (B,C,\varphi)$ and thus $(\named e,f\circ \sigma)=(\named e,f)$ for each $\sigma\in \sX$. Hence, each object in the reflective subcategory of $\widehat{\sG}_{X}$ equivalent to $\C G_{\underline \Pi_X }$ has only fixed loops.

\begin{corollary}\label{C:Exponentials}
	If $X$ has cardinality greater than $1$, the category $\C G_{\underline \Pi_X}$ does not have exponentials. 
\end{corollary}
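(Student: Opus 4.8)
The plan is to derive the non-existence of exponentials by combining Proposition~\ref{P:Exponential} with the structural observation made just above this corollary, that every object of the reflective subcategory $\C E\subseteq\widehat{\sG}_X$ equivalent to $\C G_{\underline\Pi_X}$ has only fixed loops. By the lemma just proved, the interpretation $I\colon\sG_X\to\C G_{\underline\Pi_X}$ is dense, full and faithful, so Proposition~\ref{P:Exponential} applies: the nerve $N\colon\C G_{\underline\Pi_X}\to\widehat{\sG}_X$ is a reflective embedding with essential image $\C E$, and it carries any exponential that exists in $\C G_{\underline\Pi_X}$ to the corresponding exponential in $\widehat{\sG}_X$.

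First I would locate two objects of $\C E$. The representable $\underline A$ lies in $\C E$ because $\eta_{\underline A}\colon\underline A\to NR(\underline A)$ is an isomorphism by Lemma~\ref{L:FFInt}. The symmetric $X$-graph $L$ of Example~\ref{E:Exponentials}(\ref{E:XLoops}) — one vertex with two $\sX$-fixed loops $0,1$ — also lies in $\C E$: feeding the single-vertex $\underline\Pi_X$-graph with edge set $\{0,1\}$ into the explicit nerve formula recorded above produces a symmetric $X$-graph whose vertex set is a singleton and whose arc set is $\{0,1\}$, each arc being $\sX$-fixed since the only map $X\to 1$ is constant; this is exactly $L$. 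Now suppose, toward a contradiction, that the exponential $R(L)^{R(\underline A)}$ exists in $\C G_{\underline\Pi_X}$. Proposition~\ref{P:Exponential} then gives $N\!\left(R(L)^{R(\underline A)}\right)\cong NR(L)^{NR(\underline A)}\cong L^{\underline A}$, the exponential computed in $\widehat{\sG}_X$. Since $\#X>1$, Example~\ref{E:Exponentials}(\ref{E:XLoops}) shows that $L^{\underline A}$ contains a nonfixed loop, so $L^{\underline A}\notin\C E$; but $N\!\left(R(L)^{R(\underline A)}\right)$ does lie in $\C E$, a contradiction. Hence $R(L)$ and $R(\underline A)$ have no exponential in $\C G_{\underline\Pi_X}$.

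I expect the only point needing care to be the claim that $L$ genuinely lies in the essential image of $N$, since that is what licenses the identification $N\!\left(R(L)^{R(\underline A)}\right)\cong L^{\underline A}$ and hence the use of Proposition~\ref{P:Exponential}; this reduces to the short computation with the nerve formula indicated above. Everything else is already in hand: the nonfixed loop in $L^{\underline A}$ was exhibited in Example~\ref{E:Exponentials}(\ref{E:XLoops}), and the absence of nonfixed loops in objects of $\C E$ is precisely the remark immediately preceding this corollary.
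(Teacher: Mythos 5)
Your proposal is correct and follows essentially the same route as the paper: invoke Proposition~\ref{P:Exponential} via the density/full-faithfulness lemma, note that every object in the reflective image of $N$ has only fixed loops, and contradict this with the nonfixed loop in $L^{\underline A}$ from Example~\ref{E:Exponentials}(\ref{E:XLoops}). The only difference is presentational: the paper picks $H=I(A)$ and a one-vertex looped graph $G$ in $\C G_{\underline \Pi_X}$ directly and asserts $N(G)^{N(H)}=L^{\underline A}$, whereas you work with $R(\underline A)$ and $R(L)$ and explicitly check that $\underline A$ and $L$ lie in the essential image --- a worthwhile bit of added care (indeed your $L$ with two loops is the correct choice), but the same argument.
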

\begin{proof}
	By the above observation, it is enough to show that there exist objects $G$ and $H$ in $\C G_{\underline \Pi_X}$ such that $N(G)^{N(H)}$ has a nonfixed loop in $\widehat{\DD G}_{(X,\sX)}$. Set $H\defeq I(A)$ and $G$ be the graph with one vertex and an $\sX$-loop. Then $N(G)^{N(H)}=L^{\underline A}$ as defined in Example \ref{E:Exponentials}(\ref{E:XLoops}) which we have shown has a nonfixed loop. 
\end{proof}

\subsection{The Category of Reflexive Power Graphs}

Let $r\C G_{\underline \Pi_X}$ be the category of reflexive $\underline \Pi_X$-graphs.\footnote{When $X=2$, the category of reflexive $\underline \Pi_X$-graphs is the category of conceptual graphs as given in \cite{dP}.}
To define an interpretation functor $I\colon \srG_{X}\to r\C G_{\underline \Pi_X}$, note that $M_A\iso X\sqcup 1$. Let $\eta\colon \Id_\Set\Rightarrow \underline \Pi_X$ be the natural transformation defined above and let $q\colon X\sqcup 1\to \underline \Pi_X(X)$ the map induced by the singleton assignment $\eta_X\colon X\to \underline \Pi_X(X)$, $x'\mapsto (x')_{x\in X}$ and $\top\colon 1\to \C P(X)$, $x\mapsto (x)_{x\in X}$.  Since $\underline \Pi_X(\sigma)(x)_{x\in X}=(x)_{x\in X}$ for each automap $\sigma\colon X\to X$ and $\underline \Pi_X(\overline{x'})(x)=(x')_{x\in X}$ for each constant map $\overline {x'}\colon X\to X$, the interpretation is well-defined. 

\begin{lemma}
	The interpretation functor $I\colon \srG_{X}\to r\C G_{\underline \Pi_X}$ is dense, full and faithful.
\end{lemma}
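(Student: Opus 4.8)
The plan is to mirror the proof of the analogous lemma for $\sG_X$, adding the bookkeeping forced by the extra morphism $\ell\colon A\to V$ of $\srG_X$ and by the vertex subset of a reflexive $\underline \Pi_X$-graph. Recall from the construction that $I(V)$ consists of a single part, its unique vertex, while $I(A)=(M_A,X,q)$ has vertex set $X$, one further part, namely the class $\top$ of the identity in $M_A\iso X\sqcup 1$, and incidence $q$ which restricts to the singleton cone $\eta_X$ on $X$ and names the tuple $(x)_{x\in X}$ on $\top$.

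First I would check that $I$ is full and faithful by inspecting the four hom-sets. A reflexive $\underline \Pi_X$-graph morphism $I(V)\to I(V)$ is forced to be the identity; a morphism $I(A)\to I(V)$ must collapse every part to the unique vertex, so there is exactly one, matching $\srG_X(A,V)=\{\ell\}$; a morphism $I(V)\to I(A)$ is just the choice of a vertex of $I(A)$, hence an element of $X=\srG_X(V,A)$, and it is automatically incidence-compatible precisely because $q|_X=\eta_X$ is the degeneracy demanded of a reflexive $F$-graph; and a morphism $I(A)\to I(A)$ must carry $X$ into $X$ and send $\top$ to a part with matching incidence, which one checks is exactly the right-action of an element of $\srX=\srG_X(A,A)$ on $M_A$. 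So $I$ is full and faithful, exactly as in the non-reflexive case.

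For density, let $(B,C,\varphi)$ be a reflexive $\underline \Pi_X$-graph and $\lambda\colon D\Rightarrow \Delta(K,L,\psi)$ a cocone on $D\colon I\ls (B,C,\varphi)\to r\C G_{\underline \Pi_X}$; I must build a unique $(h,k)\colon (B,C,\varphi)\to (K,L,\psi)$ with $(h,k)\circ \alpha=\lambda_{(c,\alpha)}$ for every object $(c,\alpha)$ of the comma category. On a vertex $v\in C$ use the object $\named v\colon I(V)\to (B,C,\varphi)$ and set $k(v)\defeq \lambda_{(V,\named v)}(\star)\in L$, where $\star$ is the vertex of $I(V)$. On an arbitrary part $e\in B$, choose any $f_e\colon X\to C$ with $\underline \Pi_X(f_e)=\varphi(e)$, which exists since $\underline \Pi_X(C)=\Pi_X(C)/\sX$; this gives an object $\alpha_e\colon I(A)\to (B,C,\varphi)$ sending $\top$ to $e$ and $X$ to $C$ by $f_e$, and I set $h(e)\defeq \lambda_{(A,\alpha_e)}(\top)$. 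Three verifications then finish the argument: (i) $h$ is independent of the choice of $f_e$, since two choices differ by some $\sigma\in \sX$, the comma morphism $I(\sigma)$ fixes $\top$, and cocone compatibility over $I(\sigma)$ gives the same value; (ii) $h$ restricts to $k$ on $C$ and hence lands in $L$ there, because for $v\in C$ the object $\alpha_v$ factors as $\named v\circ I(\ell)$ with $I(\ell)$ sending $\top$ to $\star$, so compatibility over $I(\ell)$ forces $h(v)=k(v)$; (iii) incidence is preserved, because each $\lambda_{(A,\alpha_e)}$ is a reflexive $\underline \Pi_X$-graph morphism with $\lambda_{(A,\alpha_e)}|_X=k\circ f_e$ (using compatibility over the morphisms $I(x)$, $x\in X$), so $\psi(h(e))=\underline \Pi_X(\lambda_{(A,\alpha_e)}|_X)(q(\top))=\underline \Pi_X(k)(\varphi(e))$. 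Uniqueness is immediate, since any factorization must agree with $\lambda$ at the objects $\named v$ and $\alpha_e$, and these pin it down on all vertices and all parts.

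The hom-set computation and the cocone chase are routine; the one place that needs care is verification (ii), i.e. confirming that the morphisms $\ell$ and $x$ of $\srG_X$ together with the composition rules $\ell\circ x_{m'}=\Id_V$ and $x\circ \ell$ of Definition \ref{D:XMGraph} interact correctly with the vertex subset and the $\eta$-degeneracy of reflexive $\underline \Pi_X$-graphs, so that the ``part recipe'' $\alpha_v$ and the ``vertex recipe'' $\named v$ agree on $C$. This coherence is exactly what is absent in the non-reflexive setting and what distinguishes this proof from that of the lemma for $\sG_X$.
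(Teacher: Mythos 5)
Your proposal is correct and follows essentially the same route as the paper: fullness and faithfulness by direct inspection of the four hom-sets, and density by building the factorization of a cocone from the objects $\named v\colon I(V)\to G$ and $\alpha_e\colon I(A)\to G$. Your verification (i) is exactly the paper's observation that $I(A)$ classifies the parts set up to precomposition by automorphisms of $I(A)$, and your steps (ii)--(iii) merely spell out the compatibility with vertex subsets and incidence that the paper leaves terse, so the two arguments coincide in substance.
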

\begin{proof}
	It is clearly full and faithful. To show it is dense, let $G$ and $H$ be $r\C G_{\underline \Pi_X}$-objects and $\lambda\colon D\Rightarrow \Delta H$ a cocone on the canonical diagram $D\colon I\ls G \to r\C G_{\underline \Pi_X}$. It can be verified that $I(A)$ classifies the parts set $G(P)$ of a graph $G$ up to precomposition by automorphism $A'\to A'$. In other words, $G(P)\iso \frac{r\C G_{\underline \Pi_X}(I(A),G)}{\sim}$ and $H(P)\iso \frac{r\C G_{\underline \Pi_X}(I(A),H)}{\sim}$ where $\sim$ is the equivalence relation induced by automorphisms of $I(A)$. Thus we define $h_P\colon G(P)\to H(P)$, $[e]\mapsto [\lambda_{e}]$ where $[e]$ is the equivalence class of the morphism $e\colon I(A)\to G$ and $\lambda_e\colon D(e)\to H$ is the component of the natural transformation $\lambda$. Since $\lambda$ is a cocone, the map is compatible with incidence operations and the restriction to vertex sets, $h_V\colon G(V)\to H(V)$. Thus $h\colon G\to H$ is the unique factorization which shows the colimit of $D$ is $G$.
\end{proof}

Note that the realization functor takes a $\widehat{\srG}_{X}$-object and quotients out the set of arcs by $\sX$. Hence the unit of the adjunction $\eta_P\colon P\to NR(P)$ is bijective on vertices and surjective on arcs. Hence the adjunction is epi-reflective. 

The full subcategory of $\widehat{\srG}_{X}$ induced by the nerve functor consists of reflexive symmetric $X$-graphs which have no nonfixed loops. Indeed if $G$ is a $r\C G_{\underline \Pi_X}$-object then $N(G)(A)=r\C G_{\underline \Pi_X}(I(A),G)$ and so if $e\colon I(A)\to G$ is a loop, i.e., for each $x\in X$ there is a $v\colon I(V)\to I(A)$ such that $e\circ I(x)=v$, then $e\circ I(\sigma)=e$. 
\begin{corollary}\label{C:NoExpR}
	If $X$ has cardinality greater than $1$, the category $r\C G_{\underline \Pi_X}$ does not have exponentials.  
\end{corollary}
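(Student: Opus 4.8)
The plan is to reproduce the argument of Corollary \ref{C:Exponentials} in the reflexive setting. Two facts are already available: by the lemma above, the interpretation $I\colon \srG_X\to r\C G_{\underline\Pi_X}$ is dense, full and faithful, so Proposition \ref{P:Exponential} applies and the nerve $N\colon r\C G_{\underline\Pi_X}\to\widehat{\srG}_X$ preserves every exponential that happens to exist in $r\C G_{\underline\Pi_X}$; and, as observed immediately before the corollary, every object of the form $N(K)$ is a reflexive symmetric $X$-graph with no nonfixed loops. Consequently, if $G^H$ were to exist in $r\C G_{\underline\Pi_X}$ for a pair of objects $G,H$, then $N(G^H)\iso N(G)^{N(H)}$ in $\widehat{\srG}_X$, and this graph would be forced to have no nonfixed loops. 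So it suffices to produce objects $G,H\in r\C G_{\underline\Pi_X}$ for which $N(G)^{N(H)}$ does contain a nonfixed loop.

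First I would set $H\defeq I(A)$; since $I$ is full and faithful, $N(I(A))\iso\underline A$ (the representable $\underline A$ is $NR$-closed by Lemma \ref{L:FFInt} and $R(\underline A)\iso I(A)$). For $G$ I would take the reflexive $\underline\Pi_X$-graph consisting of a single vertex carrying one additional loop, namely $G(P)=\{a,b\}$, $G(V)=\{a\}$, with the (forced) incidence $\partial_G(a)=\partial_G(b)=(a)_{x\in X}$ in $\underline\Pi_X(\{a\})$. Then I would check that $N(G)$ is precisely the reflexive symmetric $X$-graph $L$ of Example \ref{E:Exponentials}(\ref{E:1Loops}): $N(G)(V)\iso r\C G_{\underline\Pi_X}(I(V),G)$ is a singleton; $N(G)(A)=r\C G_{\underline\Pi_X}(I(A),G)$ has exactly two elements, according to whether the unique non-vertex element of $M_A$ is sent to $a$ or to $b$; the $\sX$-action on $N(G)(A)$ is trivial because $\underline\Pi_X$ collapses the shuffle maps and the non-vertex element of $M_A$ is $\sX$-fixed; and the $\ell$-action sends the vertex to the morphism $I(A)\to G$ that factors through $I(V)$, i.e. to the distinguished loop $0$. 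Granting $N(G)\iso L$ and $N(H)\iso\underline A$, we obtain $N(G)^{N(H)}\iso L^{\underline A}$, which Example \ref{E:Exponentials}(\ref{E:1Loops}) shows contains a nonfixed loop, contradicting the previous paragraph.

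The well-definedness of $G$ and the isomorphism $N(I(A))\iso\underline A$ are routine unwindings. The step I expect to require the most care is the identification $N(G)\iso L$ as a reflexive symmetric $X$-graph, and especially the matching of the $\ell$-actions (that $v.\ell=0$ in $L$ corresponds to the collapsing morphism in $N(G)$): this forces one to trace the action of $I$ on the morphism $\ell\colon A\to V$ — where $I(\ell)_P$ is the terminal map $!_{M_A}\colon M_A\to 1$ — through the explicit description of the nerve on reflexive power graphs, and to confirm that the distinguished loop of $N(G)$ is exactly the morphism $I(A)\to G$ that factors through $I(V)$. Once that identification is secured, the corollary follows formally, exactly as Corollary \ref{C:Exponentials} did.
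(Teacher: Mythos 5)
Your proposal is correct and follows essentially the same route as the paper's proof: reduce via the no-nonfixed-loops observation together with Proposition \ref{P:Exponential}, take $H\defeq I(A)$ and $G$ the one-vertex object with a single extra loop (the paper's ``one vertex and two 1-loops,'' counting the degenerate part as the distinguished loop), and identify $N(G)^{N(H)}\iso L^{\underline A}$ from Example \ref{E:Exponentials}(\ref{E:1Loops}). You merely make explicit the verifications ($N(I(A))\iso \underline A$, $N(G)\iso L$, matching of the $\ell$-action) that the paper leaves implicit.
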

\begin{proof}
	By the above observation, it is enough to show that there exist objects $G$ and $H$ in $\C G_{\underline \Pi_X}$ such that $N(G)^{N(H)}$ has a 1-loop in $\widehat{\DD G}_{(X,\sX)}$. Set $H\defeq I(A)$ and $G$ be the graph with one vertex and two  1-loops. Then $N(G)^{N(H)}=L^{\underline A}$ as defined in Example \ref{E:Exponentials}(\ref{E:1Loops}) which we have shown has a nonfixed loop.
\end{proof}


\begin{thebibliography}{9}{\footnotesize
	
	\bibitem{hA}
	{\sc H. Applegate and M. Tierney.} Categories with models. In \textit{Seminar on triples and categorical homology theory: lecture notes in mathematics}, No. 80. Reprinted with commentary in TAC 18 (2008) 122-180.
		
	\bibitem{rBS}
	{\sc R. Brown, I. Morris, J. Shrimpton, and C. D. Wensley.} Graphs of morphisms of graphs. \textit{Electon. J. Combin.} 15 (1) (2008) 490-509.
	
	\bibitem{rB}
	{\sc R. T. Bumby and D. M. Latch.}
	Categorical constructions in graph theory. 
	\textit{Internat. J. Math. Math. Sci.} 9 (1) (1986) 1-16.
		
	\bibitem{wD}
	{\sc W. Dofler and D. A. Waller.}
	A category-theoretic approach to hypergraphs. \tit{Archiv der Mathematik} 34 no. 1 (1980) 185-192.		
		
	\bibitem{cJ}
	{\sc C. Jakel.}
	A coalgebraic model of graphs. 
	\textit{British J. of Math. \& Comp. Sci.,} 15 (5) (2016) 1-6.

	\bibitem{sM}
	{\sc S. Mac Lane.}
	\textit{Categories for the working mathematician}, volume 5 of \textit{Graduate Texts in Mathematics}. Springer-Verlag, New York, second edition (1998).

	
	\bibitem{dP}
	{\sc D. Plessas.}
	The categories of graphs. PhD thesis, The University of Montana. Missoula, MT (2011).
		
	\bibitem{mR}
	{\sc M. Reyes and G. Reyes.}
	\textit{Generic figures and their glueings: A constructive approach to functor categories.} Polimetrica. Milano, Italy. (2004). 
	
	\bibitem{eR}
	{\sc E. Riehl.}
	\textit{Category theory in context.}
	Dover Publications, Inc. (2016).
	}

\end{thebibliography}
\end{document}